
\documentclass[reqno,12pt]{amsart}
\usepackage{ragged2e}
\usepackage{amssymb}
\usepackage{amsfonts}
\usepackage{amsmath}
\usepackage{mathrsfs}
\usepackage{diagbox}
\usepackage{color,soul}
\usepackage{bbm}
\usepackage{physics}
\usepackage{hyperref}
\usepackage{amsthm, graphicx,empheq}

\usepackage{framed}

\usepackage{lipsum} 

\usepackage{cleveref}


\setlength{\oddsidemargin}{0cm} \setlength{\evensidemargin}{0cm}
\setlength{\topmargin}{0cm}
 \setlength{\footskip}{0cm}
\setlength{\textheight}{21.0cm} \setlength{\textwidth}{16.0cm}


\newtheorem{lemma}{Lemma}[section]
\newtheorem{theorem}{Theorem}[section]
\newtheorem{proposition}{Proposition}[section]
\newtheorem{corollary}{Corollary}[section]
\theoremstyle{definition}
\newtheorem{definition}{Definition}[section]
\theoremstyle{remark}
\newtheorem{remark}{Remark}[section]
\theoremstyle{example}

\numberwithin{figure}{section}
\numberwithin{equation}{section}
\numberwithin{table}{section}

\newcommand{\mres}{\mathbin{\vrule height 1.6ex depth 0pt width
0.13ex\vrule height 0.13ex depth 0pt width 1.3ex}}

\makeatletter
\@namedef{subjclassname@2020}{\textup{2020} Mathematics Subject Classification}
\makeatother

\begin{document}

\title[Hypersonic limit for Euler flows passing cones]{Hypersonic limit for steady compressible Euler flows passing straight cones}

\author{Qianfeng Li}
\author{Aifang Qu}
\author{Xueying Su}
\author{Hairong Yuan}

\address[Q. Li]{School of Mathematical Sciences,  Key Laboratory of Mathematics and Engineering Applications (Ministry of Education) \& Shanghai Key Laboratory of PMMP,  East China Normal University, Shanghai 200241, China}
\email{\tt qfli@math.ecnu.edu.cn}

\address[A. Qu]{Department of Mathematics, Shanghai Normal University,
Shanghai,  200234,  China}\email{\tt afqu@shnu.edu.cn}

\address[X. Su]{School of Mathematics and Statistics, Xi'an Jiaotong University, Xi'an 710049, China}\email{\tt suxueying789@163.com}

\address[H. Yuan]{School of Mathematical Sciences,  Key Laboratory of Mathematics and Engineering Applications (Ministry of Education) \& Shanghai Key Laboratory of PMMP,  East China Normal University, Shanghai 200241, China}\email{\tt hryuan@math.ecnu.edu.cn}

\keywords{Compressible Euler equations; shock wave; conical flow; hypersonic limit; Radon measure solution.}

\subjclass[2020]{35L50, 35L65, 35Q31, 35R06, 76K05}

\date{\today}

\begin{abstract}

  We investigate the hypersonic limit for steady, uniform, and  compressible polytropic gas passing a symmetric straight cone. By considering Radon measure solutions, we show that as the Mach number of the upstream flow tends to infinity, the measures associated with the weak entropy solution containing an attached shock ahead of the cone converge vaguely to the measures associated with a Radon measure solution to the conical hypersonic-limit flow. This justifies the Newtonian sine-squared pressure law for cones in hypersonic aerodynamics. For Chaplygin gas, assuming that the Mach number of the incoming flow is less than a finite critical value, we demonstrate that the vertex angle of the leading shock is independent of the conical body's vertex angle and is totally determined by the incoming flow's Mach number. If the Mach number exceeds the critical value, we explicitly construct a Radon measure solution with a concentration boundary layer.  
   
 \end{abstract}

\allowbreak

\allowdisplaybreaks

\maketitle


\section{Introduction}\label{sec1}

This paper pertains to uniform supersonic flows passing a straight right-circular cone $\mathfrak{C}$, where the cone's half vertex-angle $\theta_0$ is less than the critical detach angle and sonic angle. In this scenario, a conical shock attaches to the cone's vertex, and the flow between the shock-front and the cone is supersonic. In addition to demonstrating the existence of such weak solutions by solving an inverse problem, we are interested in the hypersonic limit and  justify the celebrated Newtonian sine-squared pressure law for cones. To describe the hypersonic-limit flow field, which involves an infinitely-thin boundary layer exhibiting concentration of mass and momentum, an appropriate concept of Radon measure solutions to the compressible Euler equations is required.

We assume that the flow field is governed by the steady compressible Euler system
\begin{equation}\label{E1}
\left\{
\begin{aligned}
& \mathrm{Div}(\rho V)=0, \\
& \mathrm{Div}(\rho V\otimes V)+\mathrm{Grad}\, p=0,\\
& \mathrm{Div}(\rho EV)=0.
\end{aligned}
\right.
\end{equation}
Here $\mathrm{Div}$ and $\mathrm{Grad}$ are the divergence and gradient operator  of the Euclidean space $\mathbb{R}^3$ respectively. The unknowns  $\rho,E$ and $V=(u_1,u_2,u_3)^{\top}$ represent respectively the density of mass, the total enthalpy per unit mass, and the flow's velocity in $\mathbb{R}^3$. For a polytropic gas, the unknown scalar pressure $p$ is given by the state equation
\begin{equation}\label{E2}
p=\frac{\epsilon}{\epsilon+1}\,\rho\cdot(E-\frac{1}{2}|V|^2),
\end{equation}
with $\gamma\doteq\epsilon+1>1$ being the adiabatic exponent. Then the sound speed is $c\doteq\sqrt{\gamma p/\rho}$, and  the Mach number is $M\doteq|V|/c$. It will be shown later that after suitable scalings,  the hypersonic limit is the limit   $\epsilon\to0+$. The case of Chaplygin gas will be considered in Section \ref{sec5}.

The flow is supposed to be given and moves uniformly at supersonic speed from the left half-space:
\begin{equation}\label{Initialdata}
(\rho, V, E)=(\rho_{\infty}, (V_{\infty}, 0, 0)^{\top}, E_{\infty}) \qquad \text{in}\quad
\{x=(x_1,x_2,x_3)\in\mathbb{R}^3 : x_1\le0\}; 
\end{equation}
and is subjected to the slip boundary condition
\begin{equation}\label{Boundarycondition}
V\cdot\vec{n}|_{\mathcal{C}}=0
\end{equation}	
on the boundary 
\begin{align}
\mathcal{C}\doteq\partial\mathfrak{C}=\{x\in\mathbb{R}^3:\, x_1>0, \, x_2^2+x_3^2=x_1^2\tan\theta_0^2\}
\end{align} 
of the cone. Here  $\theta_0\in(0,\, \frac{\pi}{2})$ is the half-open angle of the cone, and 
 $\vec{n}=(n_1,n_2,n_3)$ is the unit  normal vector of $\mathcal{C}$ pointing to the cone,  i.e., $n_1>0$.

We remark that \eqref{E1}-\eqref{Boundarycondition} is an informal formulation of the problem of hypersonic flow passing a cone, which is valid only for classical solutions, while we know such problems do not have classical solutions.  We will reformulate the problem later and present a rigorous form by proposing a concept of Radon measure solutions. Then we have a proper setting for this physical problem.  The other observation is  the state function  \eqref{E2}, which differs from the usual one like $p=\exp(\hat{s})\rho^\gamma$, with $\hat{s}$ being the entropy. It is well-known that to study generalized solutions to hyperbolic conservation laws, one should be very careful to employ the primitive equations  derived directly from the conservation laws. The  choice of \eqref{E1} and \eqref{E2} turns out to be  correct since we can obtain physically consistent results from them. 
It is important to note that, unlike the prohibited change-of-dependent-variables in the state space, any smooth change of independent variables is always permissible. This is because these physical conservation laws do not depend on coordinates in the physical space $\mathbb{R}^3.$

\subsection{Literature review}\label{sec11}
The problem \eqref{E1}-\eqref{Boundarycondition} represents a classic aerodynamic model with wide applications in the aviation industry, including the design of aircraft inlets, waveriders, and aircraft forebodies. Additionally, as a challenging issue in the theory and computation of systems of conservation laws, it has garnered significant mathematical attention.

It has been observed that the problem \eqref{E1}-\eqref{Boundarycondition} admits a special solution with the following properties:

\begin{itemize}
\item[(i)] The state of the flow is constant on each ray emanating from the vertex of the cone. Such a flow is called conical flow; 
\item[(ii)] For a right circular cone with a zero-attack angle, the flow exhibits axial symmetry with respect to the axis of the cone;
\item[(iii)] If the half vertex angle of the cone is smaller than a critical value, which is dependent on the Mach number of the incoming flow, a conical shock-front attaches to the vertex of the cone, with a larger half open angle. The flow ahead of the shock-front remains to be the uniform incoming flow, while the flow behind the shock-front is still supersonic.
\end{itemize}
Extensive mathematical research has been dedicated to the study of these special conical-flow solutions, employing diverse methods and models, and investigating various classes of solutions.

For  isentropic irrotational supersonic flows passing straight cones, Busemann \cite{ABusemann} introduced the famous ``apple curve" with a clever graphical construction in the hodograph plane. Taylor and Maccoll \cite{GTaylor} solved the problem by straightforward numerical integration. It seems that there is no analytical proof of existence of such special conical flows for the non-isentropic compressible Euler equations.   Most of the mathematical analysis  are devoted to the stability  of such  flow patterns under small perturbations of the cone or the incoming flow.

Considering isentropic irrotational supersonic flows past curved cones,  there are significant progresses. For piecewise-smooth solutions with supersonic attached shocks,  see \cite{ChenLiD, CXY, Chen2001, Add2,  Add6, Add4, Add5} and references therein.  Chen and Li \cite{ChenLiD} proved  local stability for the configuration of supersonic conical shock attaching to a  slightly perturbed axisymmetric cone. Chen, Xin, and Yin \cite{CXY} demonstrated that for a high-speed uniform supersonic incoming flow, this particular configuration exhibits global stability.  
Hu and Zhang \cite{Add6} applied characteristic analysis to prove the existence of global piecewise smooth supersonic solutions for flows passing curved cones with large variation, at the cost of requiring  the incoming supersonic flow to be quite dilute. 
For non-axisymmetric perturbations, the local and global stability for such configuration were established in Chen \cite{Chen2001}, and Xin and Yin  \cite{Add2} respectively. 

For piecewise smooth solutions with supersonic shocks under perturbed incoming flows, we  refer to \cite{Add4, Add5}. Cui and Yin \cite{Add4} studied the global existence and asymptotic behavior of the solutions for supersonic flows past straight cones, with symmetric and small perturbations on the incoming flow. Li, Ingo and Yin \cite{Add5} considered general perturbations on the incoming flows. The piecewise smooth solutions with transonic shocks have been considered in \cite{CF,Add3}, in which the authors demonstrated the stability of transonic shock solutions and their far-field asymptotic behavior.

For weak solutions in the class of functions with bounded variations, Lien and Liu \cite{LienLiu}, Wang and Zhang \cite{WangZhang} employed a modified Glimm scheme to establish the global stability and asymptotic behavior of  solutions   for  high Mach number flow past  axially-symmetrically slightly perturbed cones.

For the more general piecewise-smooth isentropic Euler flows, Chen and Li \cite{CL} provided a background solution and demonstrated its local stability under small perturbations applied to both the incoming flow and the surface of the cone.

All the previous works focused on supersonic incoming flows with large Mach numbers. It is natural to ask what happens if the Mach number of the incoming flow goes to infinity.  As indicated in  \cite{Anderson}, the density may go to infinity, and concentration of mass and momentum will occur in an infinite-thin boundary layer. Therefore, the framework of integrable weak solution alone is insufficient to describe the limiting flow field. To address this, we refer to \cite{QUYUAN} and introduce Radon measure solutions in this manuscript to tackle this issue.

Mathematicians have  introduced and studied measure solutions to hyperbolic conservation laws for a long time, see the survey of Yang and Zhang \cite{Hyang} and references therein.  Chen and Liu \cite{GQCHEN} demonstrated the formation of  $\delta$-shocks when the pressure vanishes for the compressible Euler equations of polytropic gases. It is revealed that the vanishing  pressure limit is the high-Mach number limit or hypersonic limit in \cite{QUYUAN}. For compressible Euler equations of a Chaplygin gas, $\delta$-shocks are also necessary to solve certain Riemann problems (see, for example, Kong, Wei, and Zhang \cite{DXKONG}, Sheng, Wang, and Yin \cite{WSHENG}). Qu, Yuan and Zhao \cite{QUYUAN} proved that for steady supersonic flow past straight wedge, as the Mach number of the incoming flow goes to infinity, the piecewise constant weak solution converges to the singular measure solution with density containing a weighted Dirac measure on the surface of the wedge. Qu and Yuan \cite{QY} proposed a definition of Radon measure solutions for uniform hypersonic-limit flow passing a straight cone with attack angle, and constructed a measure solution containing weighted Dirac measures to the case that the attack angle is zero. When the attack angle is nonzero, Qu, Su, and Yuan \cite{QSY} present an algorithm based on Fourier spectral method and Newton's method to solve the derived non-linear and singular ordinary differential equations (ODE), and thus provided a numerical method to solve the problem with high precision.  Jin, Qu, and Yuan \cite{JQY} also studied the Radon measure solutions for hypersonic limit flows passing a finite cylindrically symmetric conical body, and analyzed the interactions between the hypersonic-limit flows and a quiescent gas behind the cones. It is discovered that for certain cases, even $\delta$-shock can only exist within a finite distance from the cone,  and the solution cannot be further extended downstream.

\subsection{Main results and contributions}\label{sec12}

This manuscript aims to answer the following two questions:
\begin{itemize}
	\item[ ] (Q1)~ Is the  problem \eqref{E1},\eqref{Initialdata} and \eqref{Boundarycondition} solvable in the framework of weak entropy solutions for given cone and given high-speed incoming flow?

	\item[ ] (Q2)~ As the Mach number of the incoming flow goes to infinity, does the weak entropy solution converge to the singular measure solution of the hypersonic limit flow in some sense?
\end{itemize}

The answer is summarized in the following theorem. 
\begin{theorem}\label{thm1}
	For uniform incoming supersonic flow past a straight right-circular cone with vertex half-open angle less than $\pi/2$, if the Mach number of the uniform incoming flow $M_{0}$ is suitably large, then there is a piecewise smooth conical weak entropy solution to the problem \eqref{E1}, \eqref{Initialdata} and \eqref{Boundarycondition}, which contains a conical shock attached to the vertex, and the flow ahead of it is the incoming flow, and the flow behind it is supersonic and conical (i.e., constant on each ray issuing from the vertex of the cone).  Furthermore, as $M_{0}\to\infty$,  the measures associated to this weak entropy solution converge vaguely  to  the corresponding  measures of the Radon measure solution of the problem of limiting-hypersonic flow passing the cone, which is given by  \eqref{49} and \eqref{4.10}.
\end{theorem}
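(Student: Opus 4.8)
The plan is to exploit the conical and axial symmetry to reduce the problem to a boundary-value problem for ordinary differential equations, solve an inverse problem for the existence statement (Q1), and then analyze the asymptotics of the collapsing shock layer to obtain the measure limit (Q2). First I would introduce spherical coordinates $(r,\theta,\varphi)$ centered at the vertex, with $\theta$ the polar angle measured from the cone's axis (the positive $x_1$-axis), and look for a solution depending on $\theta$ alone. Decomposing the velocity in the meridian plane as $V=u\,e_r+v\,e_\theta$, the system \eqref{E1} reduces to a first-order ODE system in $\theta$ for $(\rho,u,v,p)$; combining the mass and energy equations with \eqref{E2} yields the Bernoulli relation $E\equiv E_\infty$, which eliminates one unknown. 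The uniform incoming state \eqref{Initialdata} fills the region $\theta>\theta_s$ for a conical shock located at $\theta=\theta_s\in(\theta_0,\pi/2)$, across which I impose the Rankine--Hugoniot conditions: continuity of the tangential velocity $u$ together with conservation of the mass, normal-momentum, and energy fluxes through the shock cone. These determine the admissible (compressive, downstream-supersonic) post-shock state at $\theta=\theta_s$ as a function of $(\theta_s,M_0)$, and the slip condition \eqref{Boundarycondition} becomes $v|_{\theta_0}=0$.

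For (Q1) I would treat the shock angle $\theta_s$ as a parameter: starting from the post-shock data, integrate the ODE inward (decreasing $\theta$) until the first angle $\theta_*(\theta_s,M_0)$ at which $v$ vanishes, which plays the role of the cone's half-angle. One checks that the integration stays smooth and supersonic on $(\theta_*,\theta_s)$, so that the map $\theta_s\mapsto\theta_*$ is well-defined and continuous. Analyzing its behavior at the endpoints of the admissible range of $\theta_s$ and applying the intermediate value theorem should show that $\theta_*$ sweeps an interval $(0,\theta_{\mathrm{crit}}(M_0))$, with $\theta_{\mathrm{crit}}(M_0)\to\pi/2$ as $M_0\to\infty$; hence for any fixed $\theta_0<\pi/2$ and $M_0$ large enough there is a shock angle realizing $\theta_*(\theta_s,M_0)=\theta_0$. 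The resulting piecewise-smooth conical field is a weak entropy solution because the attached shock is compressive ($\rho$ jumps upward), so the entropy condition holds.

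For (Q2), after the scaling that turns $M_0\to\infty$ into $\epsilon\to0+$, the plan is to track the whole family of solutions. I expect the strong-shock relations to force the post-shock density to blow up like $\rho=O(\epsilon^{-1})$ while the disturbed region collapses with thickness $\theta_s-\theta_0=O(\epsilon)$, so that their product stays finite. Outside the layer the flow converges pointwise to the uniform state \eqref{Initialdata}, giving the absolutely continuous part of the limit measures; inside the collapsing layer I would test the conserved densities $\rho$, $\rho u_i$, $\rho E$ against arbitrary $\phi\in C_c(\mathbb{R}^3)$ and pass to the limit, the balance between the diverging density and the shrinking thickness producing weighted Dirac measures concentrated on the cone surface $\mathcal{C}$. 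Matching the weights to the mass and momentum fluxes crossing the shock identifies the limit with the measures \eqref{49} and \eqref{4.10}, and the normal momentum deposited on $\mathcal{C}$ reproduces the Newtonian sine-squared pressure law. Vague convergence then follows by splitting each integral into its bulk and boundary-layer contributions.

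The main obstacle I anticipate is precisely this boundary-layer limit: establishing sharp, uniform-in-$\epsilon$ asymptotics of the singular ODE across the thin layer --- the exact blow-up rate of $\rho$ and the collapse rate of $\theta_s-\theta_0$ --- and proving that their product integrates to the correct Dirac weights without losing or double-counting mass and momentum, and then verifying rigorously that the concentrated limit satisfies the measure-solution identities. By comparison, the reduction to ODEs and the inverse-problem existence argument are comparatively standard.
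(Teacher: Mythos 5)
Your treatment of the existence part (Q1) is essentially the paper's: spherical reduction to a conical ODE system, Rankine--Hugoniot data at a trial shock angle, shooting inward to the angle where the tangential velocity vanishes, continuity of the resulting map, and the intermediate value theorem. One point you gloss over (``one checks that the integration stays smooth and supersonic'') is where the paper's real work lies: a monotonicity lemma (Lemma \ref{Lemma111}, Corollary \ref{Corollary111}) showing that behind the shock $\rho$, $p$ increase toward the cone while $u$, $w$, $c$ decrease and the \emph{tangential} Mach number $M_n=|u|/c$ stays below $1$. This yields an invariant region keeping $\det T=c^2-u^2>0$ (so the ODE never degenerates), and the bound $u'<-\cos\bar\beta$ of \eqref{eq326new}, which forces $u$ to reach zero within $O(\epsilon)$ of the shock --- the estimate \eqref{A23} that both closes the shooting argument and gives the layer-collapse rate $|\beta^\epsilon-\theta_0|\to0$ you later need.

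For the hypersonic limit (Q2) there is a genuine gap, and you identify it yourself: your plan hinges on ``sharp, uniform-in-$\epsilon$ asymptotics of the singular ODE across the thin layer,'' i.e.\ the blow-up rate of $\rho^\epsilon$, and you do not supply them. The paper never needs them. Instead, the Dirac weights are computed \emph{exactly for every} $\epsilon$ from the conservation laws: integrating the mass equation \eqref{eq1} over the layer $\Omega_2^\epsilon$, using the slip condition at $\theta_0$ and the Rankine--Hugoniot mass-flux condition at $\beta^\epsilon$, gives the identity \eqref{**},
\begin{equation*}
\int_{\Omega_2^\epsilon}\rho^\epsilon w^\epsilon\,\mathrm{d}A=\pi\sin^2\beta^\epsilon ,
\end{equation*}
with no reference to the size of $\rho^\epsilon$ alone. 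All layer integrals in the vague-convergence proof (Lemma \ref{lem41}) are then reduced to this one quantity via the monotonicity bounds: $|u^\epsilon/w^\epsilon|\le|u^\epsilon(\beta^\epsilon)/w^\epsilon(\beta^\epsilon)|\to0$ kills the tangential-momentum terms, and $w^\epsilon(\theta)\ge w^\epsilon(\beta^\epsilon)=\cos\beta^\epsilon$ lets one write, e.g., $\rho^\epsilon=\rho^\epsilon w^\epsilon\cdot(1/w^\epsilon)$ to handle the mass measure, producing the weight $\frac12\tan\theta_0$ in \eqref{49} as $\beta^\epsilon\to\theta_0$. So the ``matching to fluxes crossing the shock'' you mention informally is in fact the rigorous mechanism, not an identification step after an asymptotic analysis; if you insisted on your route you would need uniform two-sided pointwise control of $\rho^\epsilon$ across the layer, which would in effect force you to re-derive the monotonicity lemma and still integrate the conservation law to fix the constants. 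The paper's flux-identity argument is both shorter and complete, and it is also what yields $\lim_{\epsilon\to0}p^\epsilon(\theta_0)=\sin^2\theta_0$, the Newtonian law \eqref{4.11}.
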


For weak entropy solutions and Radon measure solutions of the problem \eqref{E1}, \eqref{Initialdata} and \eqref{Boundarycondition}, see Definition  \ref{Def1} and \ref{Def2}, where the problem is reformulated and reduced to the unit 2-sphere $S^2$, thanks to the fact that the flow is constant on each ray starting from the vertex of the cone. 

Different from Busemann \cite{ABusemann}, or Taylor and Maccoll \cite{GTaylor}, in which supersonic flows passing straight cones were studied by geometric methods or numerical methods, we present an analytical  proof on the existence of conical flow field behind the conical shock.
The conical flow behind the shock is governed by a two-point boundary value problem of nonlinear ODE.
We find from the ODE  that the states $\rho, p, u, w$ are monotonically increasing from the shock to the cone, which ensures the solvability of an inverse problem, and then by checking continuity,  we solve  the direct problem. Also, the monotonicity of these quantities helps us to demonstrate that the shock surface converges to the  cone's surface as  the incoming flow's Mach number tends to infinity, which is the key point for proving that the piecewise smooth weak entropy solution converges to the singular measure solution in the sense indicated in Theorem \ref{thm1} (or to be more specific, see Lemma \ref{lem41}). The convergence also demonstrates  the rationality of the concept of  Radon measure solutions, and justifies the Newtonian sine-squared pressure  law in hypersonic aerodynamics.

\subsection{Structure of the paper} The rest of the paper is organized as follows. In Section \ref{sec2}, we firstly reformulate our problem in spherical coordinates and present the definitions of weak entropy solution and Radon measure solution. Then, assuming that the flow field is piecewise smooth and axially symmetric, we reduce the original problem to a boundary value problem of ODE. In Section \ref{s3}, we apply the shooting method to study the solvability of the ODE problem. Indeed, we firstly study the inverse problem for hypersonic flow past cones, namely problem \eqref{E1}, \eqref{Initialdata} and \eqref{Boundarycondition} with given incoming flow and leading shock position, i.e., it's half-open angle $\beta$. Then we prove that the mapping from the shock position $\beta$ to the cone's surface (represented by the cone's half-open angle $\theta_0$) is continuous, and  solve the direct problem, namely problem \eqref{E1}, \eqref{Initialdata} and \eqref{Boundarycondition} with given incoming flow and $\theta_0$. In Section \ref{sec4}, we prove that the measures of the piecewise-smooth weak solutions converge vaguely to the measures associated with the  Radon measure solution of the hypersonic-limit-flow problem. Section \ref{sec5} is devoted  to the case that  a supersonic Chaplygin gas passes straight cones.

\section{Conical flow and reformulation of the problem}\label{sec2}

As in \cite{QY}, we divide $V$ by $V_\infty$, $\rho$ by $\rho_\infty$, $p$ by $\rho_\infty|V_\infty|^2,$ $E$ by $|V_\infty|^2$, and still denote the acquired functions as $ V, \, \rho,\,  p,$ and $E$. It is obvious that they also satisfy the system  \eqref{E1}, but the incoming flow becomes
\begin{equation}\label{2.5}
    U_0=(\rho_0=1,~ V_0=(1, 0, 0)^\top, ~E_0),~~~\quad E_0>\frac{1}{2}. 
\end{equation}
We immediately get
\begin{equation}\label{2.6}
    p_0=\frac{\epsilon}{\epsilon+1}(E_0-\frac{1}{2}), 
\end{equation}
and
\begin{equation}\label{2.7}
    M_0^2=\frac{V_0^2}{c_0^2}=\frac{1}{c_0^2}=\frac{1}{\epsilon(E_0-\frac{1}{2})}. 
\end{equation}
Thus $\epsilon\to0$ is equivalent to $M_0\to\infty$ (i.e., the hypersonic limit).\footnote{Here $E_0>1/2$ is an assumption on the incoming flow to ensure that $\epsilon\to0$ implies $M_{0}\to+\infty$, see also  \cite[Proposition 1]{QUYUAN}. } In the following we also set $E'\doteq E_0-\frac{1}{2},$ and consider 
the following  problem: 
\begin{framed}
\textsc{Problem 1:}~ Find a solution in an appropriate sense  to \eqref{E1},\eqref{2.5},\eqref{2.6}, and \eqref{Boundarycondition}.
\end{framed}

\subsection{Conical flow and reduced problem  on $S^2$}
Notice that if
$$U(x)=(\rho(x),V(x), E(x))\in \mathbb{R}^5$$
solves Problem 1,  then for any fixed positive number  $a$, $U(ax)$ also solves it. Thus Problem 1 admits solutions $U$ satisfying $U(x)=U(ax)$ for all $a>0$. Such flows are called {\it conical flows.} They are constant on any ray issuing from the origin, {i.e.}, vertex of the cone.   
Therefore,  for conical flows, the problem can be reformulated  on the unit sphere $S^2\subset\mathbb{R}^3$. Applying  some notations and concepts from differential
geometry as in \cite{QY},  we introduce the decomposition into the tangential and radial part of the velocity $V$: \begin{equation}
    V=\tilde{u}+w\vec{\partial}_r, \quad\text{and}\quad \tilde{u}=u\vec{\partial}_\theta+v\vec{\partial}_\phi.
\end{equation}
Here, $\vec{\partial}_\theta, \,\vec{\partial}_\phi,\,\vec{\partial}_r$ are the basis vectors in the latitude, longitude and radial direction under the natural frame of spherical coordinates of $\mathbb{R}^3$, namely $r=|x|$, $x_1=r\cos\theta$, $x_2=r\sin\theta\cos\phi$, $x_3=r\sin\theta\sin\phi,$ and $\theta\in[0,\,\pi]$, $\phi\in[0,\, 2\pi)$.   Referring to \cite{QY}, the governing compressible Euler equations on $S^2$ read:
\begin{align}
	& \mathrm{div}(\rho \tilde{u})+2\rho w=0,~ \label{eq1}\\
	& \mathrm{div}(\rho \tilde{u}E)+2\rho wE=0,~ \label{eq2}\\
	& \mathrm{div}(\rho w\tilde{u})+2\rho w^2-\rho|\tilde{u}|^2=0 \label{eq3},~\\
	& \mathrm{div}(\rho V\otimes V)+3\rho w\tilde{u}+\mathrm{grad}\, p=0,  \label{eq4}
\end{align}
where 
\begin{align}
    & \mathrm{div}\,\tilde{u}\doteq\frac{1}{\sqrt{\sin\theta}}\left(\partial_\theta(\sqrt{\sin\theta}\,u)+\partial_\phi(\sqrt{\sin\theta}\,v) \right),\\
    & \mathrm{grad}\,p\doteq \partial_\theta p\cdot\Vec{\partial}_\theta+\sin^2\theta\partial_\phi p \cdot\Vec{\partial}_\phi
\end{align}
are respectively the divergence and gradient operator on $S^2$. The cone's surface is now reduced to the circle $C$ on $S^2$ given by  
\begin{align}
{C}\doteq\{(r, \theta,\,\phi):\,r=1,\, \theta=\theta_0,\, \phi\in[0,\,2\pi)\}.\end{align} 
The slip condition on ${C}$ becomes
\begin{equation}
(\tilde{u},\,\mathbf{n})\Big|_{{C}}=0,  \label{NewBoundarycondition}
\end{equation}
with $\mathbf{n}$ the unit outer normal vector of ${C}$ on $S^2$, and $(\cdot,\cdot)$ being the inner product of tangent vectors on ${S}^2.$ 

Set  $\Omega\doteq\{(r, \theta,\phi):\, r=1,\,\theta\in(\theta_0,\,{\pi}],\, \phi\in[0,\,2\pi)\}$ to be the region occupied by the gas on the sphere $S^2$,
with the uniform flow 
\begin{align}
U_0=(\rho_0=1, \,V_0=u_0\vec{\partial}_\theta+v_0\vec{\partial}_\phi+w_0\vec{\partial}_r=&\sin\theta\vec{\partial}_\theta+\cos\theta\vec{\partial}_r,\, E_0),   \nonumber\\
&\quad\quad \theta\in(\beta, \,\pi], ~~\phi\in[0,2\pi)\label{upcoming}
\end{align}
 ahead of the leading conical shock-front, whose half vertex angle is $\beta\in(\theta_0, \,\pi/2)$. If  $\theta_0$ is fixed, then $\beta$ depends only on the $\epsilon$ in the state equation \eqref{E2}.

Hence \textsc{Problem $1$} is reformulated as the following 
\begin{framed}
 \textsc{Problem $1'$:}  Find a solution in an appropriate sense to  \eqref{eq1}-\eqref{eq4}, \eqref{NewBoundarycondition}, \eqref{upcoming}.
\end{framed}

\subsection{Definitions of weak entropy solution and Radon measure solution}
In this subsection we present a rigorous understanding of Problem $1'$ by introducing the definitions of weak entropy solutions and Radon measure solutions.

\begin{definition}[weak entropy solution]\label{Def1}
    Let $\rho,~w,~E$ be bounded integrable functions, and $\tilde{u}$ a bounded integrable vector field on $\Omega$. We call $(\rho,\tilde{u},w,E)$ a weak entropy solution to \textsc{Problem $1'$}, if for any $\psi\in C^1(S^2)$, and any $C^1$ vector field $\Psi$ on $S^2,$ 
    \begin{align}
        \int_\Omega(\rho \tilde{u},\,\nabla\psi)\,\mathrm{d}A & =\int_\Omega 2\rho w\psi\, \mathrm{d}A,~ \label{3.2.1}\\
       \int_\Omega(\rho E\tilde{u},\,\nabla\psi)\,\mathrm{d}A & =\int_\Omega 2\rho E w\psi \,\mathrm{d}A,~ \label{3.2.2}\\
       \int_\Omega(\rho w\tilde{u},\,\nabla\psi)\,\mathrm{d}A & =\int_\Omega (2\rho w^2-\rho|\tilde{u}|^2)\psi \,\mathrm{d}A,~ \label{3.2.3}\\
       \int_\Omega(\rho \tilde{u}\otimes \tilde{u},\,\mathrm{D}\Psi)\,\mathrm{d}A+\int_\Omega p~\mathrm{div}\Psi\,\mathrm{d}A & =\int_\Omega 3\rho w(\tilde{u},\,\Psi)\,\mathrm{d}A+\int_{{C}} p(\mathbf{n},\Psi)\,\mathrm{d}s,~\label{3.2.4}
   \end{align}
   and the pressure $p$ increases for flow crossing any discontinuity in $(\rho, \tilde{u}, w, E)$. Here and in the sequel,  $\mathrm{d}A$ is the standard area measure on $S^2$, and $\mathrm{d}s$ is the arc-length differential of the curve $C\subset S^2$. The definitions of $\nabla$ and $\mathrm{D}$ (the gradient and covariant differential on $S^2$) refers to \cite[Table 3]{QY}.\footnote{These definitions are not necessary for the main part of this work.} 
\end{definition}

\begin{definition}[Radon measure solution] \label{Def2}
	Let $\varrho$ be a nonnegative Radon measure on $\bar{\Omega}$, and $w,\,E$ be $\varrho$-measurable scalar functions, and $\tilde{u}$ be a $\varrho$-measurable vector field. We call $(\varrho,\,\tilde{u},\,w,\,E)$ a Radon measure solution to Problem $1'$, if there exist Radon measures $m_a, m_e, m_r, m_t, n_a, n_e, n_r, n_t, \wp$ on $\bar{\Omega}$, and a  positive integrable function $W_C$ on ${C}$, such that 
	
	(i) $m_a, m_e, m_r, m_t, n_a, n_e, n_r, n_t, \wp$ are absolute-continuous with respect to $\varrho$, and $\wp$ is nonnegative;
 	
	(ii)  for any  $\psi\in C^1(S^2),$ and any continuously differentiable vector field $\Psi$ on $S^2$,
	\begin{align}
	& \langle m_a,\,\nabla \psi  \rangle=2 \langle n_a,\,\psi  \rangle ,\\
	&\langle m_e,\,\nabla \psi  \rangle=2\langle n_e,\,\psi  \rangle,\\
	&\langle m_r,\,\nabla\psi  \rangle=2\langle n_r,\,\psi  \rangle-\langle n_t,\,\psi  \rangle,\\
	&\langle m_t,\,\mathrm{D}\Psi\rangle+\langle \wp,\, \mathrm{div}\Psi  \rangle=3\langle m_r,\,\Psi  \rangle+\langle W_C\mathbf{n}_C\delta_C,\, \Psi  \rangle;
	\end{align}

	(iii) the Radon-Nikodym derivatives fulfill the following nonlinear constraints: 
		\begin{align}
	& E=\frac{\mathrm{d}m_e/\mathrm{d}\varrho}{\mathrm{d}m_a/\mathrm{d}\varrho}=\frac{\mathrm{d}n_e/\mathrm{d}\varrho}{\mathrm{d}n_a/\mathrm{d}\varrho},\\
	& w=\frac{\mathrm{d}m_r/\mathrm{d}\varrho}{\mathrm{d}m_a/\mathrm{d}\varrho}=\frac{\mathrm{d}n_r/\mathrm{d}\varrho}{\mathrm{d}n_a/\mathrm{d}\varrho}=\frac{\mathrm{d}n_a}{\mathrm{d}\varrho},\\
	& \tilde{u}=\frac{\mathrm{d}m_a}{\mathrm{d}\varrho},~~~~~|\tilde{u}|^2=\frac{\mathrm{d}n_t}{\mathrm{d}\varrho},~~~~~\tilde{u}\otimes \tilde{u}=\frac{\mathrm{d}m_t}{\mathrm{d}\varrho};
	\end{align}
	
	(iv) if $\varrho\ll\mathrm{d}A$, then for the  Radon-Nikodym derivations 
	\begin{equation}
	\rho=\frac{\mathrm{d}\varrho}{\mathrm{d}A},~~ p=\frac{\mathrm{d}\wp}{\mathrm{d}A},
	\end{equation}
 it holds $\mathrm{d}A$-a.e. that 
	\begin{equation}\label{eq226new}
	p=\frac{\epsilon}{\epsilon+1}\rho\cdot(E-\frac{1}{2}(|\tilde{u}|^2+w^2)),~
	\end{equation}
	and the physical entropy condition is valid, namely $p$ increases for flows across any discontinuity in $(\rho,\, \tilde{u},\, w,\, E)$. \qed
\end{definition}

Here and in the sequel, the pairing $\langle\cdot,\cdot\rangle$ is defined by  $$\langle m,\phi\rangle\doteq\int_{S^2} \phi\cdot
	\mathrm{d}m,$$ where $m$ is a (vector-valued) Radon measure on $\bar{\Omega}\subset S^2$, and $\phi \in C(S^2)$ is a (vector-valued)  test function. Particularly, if $s$ is the arc-length parameter of the curve ${C}\subset S^2$, then 
$$	\langle W_C\mathbf{n}_C\delta_C,\, \Psi  \rangle=\int_{{C}}W_C(s)(\mathbf{n}_C(s), \,\Psi|_{{C}}(s))\,\mathrm{d} s.$$   
We also use standard notations in measure theory, such as $m\mres \Omega$ to denote the measure obtained by restricting a measure $m$ to a $m$-measurable set $\Omega$,  and $\varrho\ll\mathrm{d}A$ means the measure $\varrho$ is absolute-continuous with respect to $\mathrm{d}A$.

As shown in \cite{QY}, the function $W_C$ represents the force per unit area (i.e., pressure) acting on the cone by the fluids.

\begin{remark}
	Radon measure solution is a generalization of the weak entropy solution. In fact, supposing that $(\rho, \tilde{u}, w, E)$ is a weak entropy solution to   \textsc{Problem $1'$}, then ${U}\doteq(\varrho\doteq\rho\, \mathrm{d}A, \tilde{u}, w,E)$ is a Radon measure solution to {Problem $1'$}.
\end{remark}

\subsection{Further simplification of {Problem $1'$} }
According to Anderson \cite{Anderson}, the flow field of  Problem $1'$ with finite incoming Mach number is composed of constant state ahead of a shock and a smooth self-similar field behind the shock. Due to the rotational invariance of the problem  with respect to the axis of the cone, the solution is independent of $\phi$, and $v\equiv0$; that is, the weak entropy solution is of the form
\begin{equation}\label{ss}
U=
\left\{
\begin{aligned}
&  (\rho_0=1, \, V_0\in S^2,\, E_0)\doteq U_0,& \text{if}~\beta<\theta\le \pi, \\
& (\rho(\theta), \, \tilde{u}=u(\theta)\vec{\partial}_{\theta}, \, w(\theta), \,E(\theta)), &\text{if}~ \theta_0<\theta<\beta.
\end{aligned}
\right.
\end{equation}
Recall here that $\theta_0$ is the half vertex angle of the given cone, and $\beta$ is the half-vertex-angle of the corresponding conical shock-front. 
Without confusion, in the following, we denote $U=U(\theta)$,  and the notation $f'$ means  $f'(\theta)$.

\begin{proposition}\label{Prop1}
	Suppose that $U$ is a weak entropy solution of the form  \eqref{ss} and $C^1$ in $(\theta_0, \beta).$ Then for $\theta\in(\theta_0, \beta)$, it holds that  
	\begin{equation}\label{Equ1}
	\left\{
	\begin{aligned}
	& \rho u \cot\theta+\partial_\theta (\rho u) +2\rho w  =0,\\
	& \partial_\theta w=u,\\
	& \rho w u+\rho u\partial_\theta  u +\partial_\theta  p =0,\\
	& \partial_\theta E  =0,
	\end{aligned}
	\right.
	\end{equation}
	with the boundary conditions 
	\begin{equation}\label{Shockcondition}
		\begin{aligned}
	&\rho(\beta)=\frac{(\epsilon+2)M_{0n}^2}{2+\epsilon M_{0n}^2} \rho_0,\quad  u(\beta)=-\frac{2+\epsilon M_{0n}^2}{(\epsilon+2)M_{0n}^2} \sin\beta,\\
	&p(\beta)=\frac{2(\epsilon+1)M_{0n}^2-\epsilon}{2+\epsilon}p_0, \quad M_n^2(\beta)=\frac{M_{0n}^2+\frac{2}{\epsilon}}{\frac{2(\epsilon+1)}{\epsilon}M_{0n}^2-1},\\
	& E(\beta)= E_0,\quad w(\beta)=\cos\beta,
	\end{aligned}
	\end{equation}
	and
	\begin{equation}\label{Conecondition}
	u(\theta_0)=0.
	\end{equation}
Here $M_n\doteq{|u|}/{c}$ denotes the tangential Mach number, and $M_{0n}=u_0(\beta)/c_0=M_0\sin\beta,$ with $M_0$  given by \eqref{2.7}. 

\end{proposition}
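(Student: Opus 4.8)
The plan is to read off three kinds of information from the weak formulation of Definition \ref{Def1}, exploiting the fact that the test functions may be localized. Functions $\psi,\Psi$ supported in the open arc $(\theta_0,\beta)$ will recover the classical equations in the smooth region and hence the interior system \eqref{Equ1}; functions concentrated across the circle $\{\theta=\beta\}$ will produce the Rankine--Hugoniot relations \eqref{Shockcondition}; and functions supported near the cone $C=\{\theta=\theta_0\}$ will expose the boundary term that encodes the slip condition \eqref{Conecondition}.

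For the interior system I would first take $\psi,\Psi$ compactly supported in $(\theta_0,\beta)$; since $(\rho,\tilde u,w,E)$ is $C^1$ there, integrating \eqref{3.2.1}--\eqref{3.2.4} by parts and invoking the fundamental lemma of the calculus of variations returns the strong conservation form \eqref{eq1}--\eqref{eq4}. Imposing axial symmetry ($\partial_\phi\equiv0$, $v\equiv0$, $\tilde u=u\,\vec{\partial}_\theta$) and evaluating the operators on $S^2$, the continuity equation \eqref{eq1} is exactly the first line of \eqref{Equ1}. Expanding $\mathrm{div}(\rho\tilde u E)=E\,\mathrm{div}(\rho\tilde u)+\rho(\tilde u,\nabla E)$ in \eqref{eq2} and $\mathrm{div}(\rho w\tilde u)=w\,\mathrm{div}(\rho\tilde u)+\rho(\tilde u,\nabla w)$ in \eqref{eq3}, then substituting $\mathrm{div}(\rho\tilde u)=-2\rho w$ from \eqref{eq1}, the bracketed conservative parts cancel and leave $\rho(\tilde u,\nabla E)=0$ and $\rho(\tilde u,\nabla w)=\rho|\tilde u|^2$; for $u\neq0$ these are $\partial_\theta E=0$ and $\partial_\theta w=u$, the fourth and second lines of \eqref{Equ1} (the endpoint $u=0$ being recovered by continuity). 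The third line is the $\vec{\partial}_\theta$-component of \eqref{eq4}: writing $\mathrm{div}(\rho\tilde u\otimes\tilde u)=\tilde u\,\mathrm{div}(\rho\tilde u)+\rho\,\tilde u\!\cdot\!\nabla\tilde u$ and again using $\mathrm{div}(\rho\tilde u)=-2\rho w$, the resulting $-2\rho wu$ combines with the geometric source $3\rho w\tilde u$ to give the net term $\rho wu$, so that \eqref{eq4} reduces to $\rho wu+\rho u\,\partial_\theta u+\partial_\theta p=0$.

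To obtain \eqref{Shockcondition} I would concentrate the test functions on a thin neighborhood of the shock circle $\{\theta=\beta\}$ separating the constant state $U_0$ from the downstream state. Since the source terms in \eqref{eq1}--\eqref{eq4} are bounded, they drop out in the thin-strip limit, and the weak equations force continuity of the fluxes normal to the shock curve; the normal direction on $S^2$ is $\vec{\partial}_\theta$, so these are continuity of $\rho u$, $\rho uE$, $\rho uw$, and of $\rho u^2+p$. From continuity of $\rho uw$ and of $\rho u$ one deduces that $w$ is continuous, hence $w(\beta)=\cos\beta$; likewise $E(\beta)=E_0$. The remaining relations, rewritten in terms of the normal Mach number $M_{0n}=M_0\sin\beta$ and with $\gamma=\epsilon+1$, are the classical normal-shock formulas, yielding the stated expressions for $\rho(\beta)$, $u(\beta)$, $p(\beta)$, and $M_n^2(\beta)$; the entropy inequality of Definition \ref{Def1} selects the compressive branch and requires $M_{0n}>1$.

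Finally, for \eqref{Conecondition} I would take test functions supported near $C$. Integrating \eqref{3.2.1} by parts produces the boundary term $\int_C(\rho\tilde u,\mathbf{n})\psi\,\mathrm{d}s$, which has no counterpart on the right-hand side; matched against the interior relation \eqref{eq1}, it forces $(\rho\tilde u,\mathbf{n})|_C=0$, and since $\rho>0$ and $\mathbf{n}\parallel\vec{\partial}_\theta$ this is precisely $u(\theta_0)=0$ (consistently, the term $\int_C p(\mathbf{n},\Psi)\,\mathrm{d}s$ retained in \eqref{3.2.4} is exactly what survives once this normal flux vanishes, and represents the pressure force on the cone). I expect the main obstacle to be the interior tangential-momentum reduction, namely computing the covariant divergence of $\rho\tilde u\otimes\tilde u$ on the curved sphere so that its curvature contribution combines correctly with the source $3\rho w\tilde u$; a secondary difficulty is keeping the orientation of $\vec{\partial}_\theta$ and of the shock normal consistent throughout, so that the signs in the Rankine--Hugoniot formulas \eqref{Shockcondition} come out as stated.
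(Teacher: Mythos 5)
Your proposal is correct and takes essentially the same route as the paper: the paper likewise uses the piecewise-smoothness of the ansatz \eqref{ss} to read off from Definition \ref{Def1} the interior equations (giving \eqref{Equ1}), the Rankine--Hugoniot jump conditions and entropy inequality at $\theta=\beta$ (giving \eqref{Shockcondition} via the classical normal-shock algebra), and the condition \eqref{Conecondition} at the cone. The only cosmetic difference is that the paper obtains $u(\theta_0)=0$ by citing the slip condition \eqref{NewBoundarycondition} directly, whereas you recover it from the absence of a boundary term in the weak mass equation \eqref{3.2.1} --- a slightly more self-contained reading of the same definition.
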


\begin{proof}
	Since $U$ is piecewise smooth, Definition \ref{Def1} shows that \eqref{eq1}-\eqref{eq4} hold in the region between the cone's surface and the assumed shock-front  $\{(r,\theta,\phi):\, r=1,\, \theta\in(\theta_0, \beta)\}$, and the Rankine-Hugoniot jump conditions and entropy condition  hold on the shock-front $\{(r,\theta,\phi):\, r=1,\, \theta=\beta\}.$ Substituting \eqref{ss} in \eqref{3.2.1}-\eqref{3.2.4}, direct computations give \eqref{Equ1}, and the Rankine-Hugoniot jump conditions read 
	\begin{align}
	& (\rho_0u_0-\rho u,~\vec{\partial}_\theta)\Big|_{\theta=\beta}=0, \label{RH1.1}\\
	& (\rho_0u_0E_0-\rho uE,~\vec{\partial}_\theta)\Big|_{\theta=\beta}=0,\\
	& (\rho_0w_0u_0-\rho wu,~\vec{\partial}_\theta)\Big|_{\theta=\beta}=0,\\
	& (p+\rho(u,\,\vec{\partial}_\theta)^2)\Big|_{\theta=\beta}=p_0+\rho_0(u_0,\,\vec{\partial}_\theta)^2\Big|_{\theta=\beta},\\
	& \rho_0(u_0,\,\vec{\partial}_\theta)(u_0,\,\vec{\partial}_\phi)\Big|_{\theta=\beta}+\rho(u,\,\vec{\partial}_\theta)(u,\,\vec{\partial}_\phi)\Big|_{\theta=\beta}=0. \label{RH1.5}
	\end{align}	The entropy condition is 
	\begin{align}p(\beta^{\epsilon})>p_0. \label{RH1.6}
	\end{align}
	From these algebraic equations and inequality, one may get   \eqref{Shockcondition} by straightforward calculations, and \eqref{Conecondition} follows directly from \eqref{NewBoundarycondition}.
	The proof is complete.
	\end{proof}

	We next  derive some formulas from Proposition \ref{Prop1}, which is used to prove Lemma \ref{Lemma111} in the next section. 
	
	\begin{corollary}\label{Cor1}
	Suppose that $(\rho(\theta), u(\theta), w(\theta), E(\theta))$ is a smooth solution of \eqref{Equ1}, and $p$ is given by \eqref{E2}. Then we have 
	\begin{equation}\label{A13}
	(\frac{p}{\rho^{\epsilon+1}})'(\theta)=0, \qquad\theta\in(\theta_0,\beta),
	\end{equation}
	and
	\begin{equation}\label{A14}
	(|V|^2)'(\beta)>0.
	\end{equation}
	\end{corollary}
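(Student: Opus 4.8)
The plan is to read off both identities directly from the reduced system \eqref{Equ1} together with the state law \eqref{E2}, treating \eqref{A13} as the statement that the specific entropy $p/\rho^{\epsilon+1}$ is constant behind the shock, and \eqref{A14} as a sign computation at the shock that rests on the admissibility inequality \eqref{RH1.6}. Throughout I use $|V|^2=u^2+w^2$ (recall $v\equiv 0$) and the kinematic identity $w'=u$, which is the second equation of \eqref{Equ1}.

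For \eqref{A13}, I first rewrite \eqref{E2} in Bernoulli form
\[
E=\frac{\epsilon+1}{\epsilon}\frac{p}{\rho}+\frac12(u^2+w^2)
\]
and differentiate in $\theta$. Since $\partial_\theta E=0$ by the last equation of \eqref{Equ1} and $w'=u$, this yields $\frac{\epsilon+1}{\epsilon}(p/\rho)'+u(u'+w)=0$. The third (momentum) equation of \eqref{Equ1} gives $p'=-\rho u(u'+w)$, hence $u(u'+w)=-p'/\rho$; substituting and clearing denominators turns the relation into $p'\rho=(\epsilon+1)\,p\rho'$, which is exactly $(p/\rho^{\epsilon+1})'=0$. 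Note that this step uses only the energy, momentum, and kinematic equations, not the mass equation.

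For \eqref{A14}, I compute $(|V|^2)'=2(uu'+ww')=2u(u'+w)$ using $w'=u$, so by the momentum equation $(|V|^2)'=-2p'/\rho$. Since $\rho(\beta)>0$ and $u(\beta)<0$ from \eqref{Shockcondition}, it suffices to show $u'(\beta)+w(\beta)<0$. To obtain a closed expression for $u'+w$, I insert the isentropic relation \eqref{A13}, written as $\rho'=\rho p'/((\epsilon+1)p)=-\rho^2 u(u'+w)/((\epsilon+1)p)$, into the mass equation $\rho u\cot\theta+\rho'u+\rho u'+2\rho w=0$. Using $c^2=(\epsilon+1)p/\rho$ and $M_n^2=u^2/c^2$, and setting $X=u'+w$, the mass equation collapses (the $M_n^2$-terms recombining) to $(1-M_n^2)X=-(u\cot\theta+w)$, i.e.
\[
u'+w=-\frac{u\cot\theta+w}{1-M_n^2}.
\]

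It then remains to check the two sign facts at $\theta=\beta$, which is the crux of the argument and where the admissibility of the shock enters. The entropy inequality \eqref{RH1.6}, $p(\beta)>p_0$, is equivalent via \eqref{Shockcondition} to $M_{0n}>1$. This single inequality yields both $M_n^2(\beta)<1$, so $1-M_n^2(\beta)>0$, and, using $u(\beta)\cot\beta=-\frac{2+\epsilon M_{0n}^2}{(\epsilon+2)M_{0n}^2}\cos\beta$ together with $w(\beta)=\cos\beta$,
\[
u(\beta)\cot\beta+w(\beta)=\frac{2(M_{0n}^2-1)}{(\epsilon+2)M_{0n}^2}\cos\beta>0,
\]
where $\cos\beta>0$ because $\beta<\pi/2$. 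Hence $u'(\beta)+w(\beta)<0$, and with $u(\beta)<0$ I conclude $(|V|^2)'(\beta)=2u(\beta)\bigl(u'(\beta)+w(\beta)\bigr)>0$. The main obstacle is therefore not the differentiation but the two sign verifications at the shock; both reduce to the admissibility condition $M_{0n}>1$ forced by \eqref{RH1.6}.
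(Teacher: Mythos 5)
Your proof is correct, and for \eqref{A13} it is essentially identical to the paper's: both differentiate the Bernoulli form of \eqref{E2} using $E'=0$ and $w'=u$, then eliminate $u(u'+w)$ via the momentum equation. For \eqref{A14} you arrive at the same key identity as the paper's \eqref{A7}, namely $(1-M_n^2)(u'+w)=-(u\cot\theta+w)$, but by a cleaner route: you substitute the already-established isentropic relation \eqref{A13} into the mass equation, whereas the paper first solves for $u'$ explicitly by combining \eqref{A5}, \eqref{A6} and \eqref{A4}. The genuine difference is in the final sign check. You invoke the entropy condition \eqref{RH1.6} (equivalently $M_{0n}>1$) to conclude \emph{separately} that $1-M_n^2(\beta)>0$ and $u(\beta)\cot\beta+w(\beta)>0$. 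The paper instead substitutes the explicit shock values \eqref{A8}--\eqref{A9} into \eqref{A7}, whereupon the factors $(M_{0n}^2-1)$ in numerator and denominator cancel, giving
\begin{equation*}
(|V|^2)'(\beta)=-u(\beta)\,\frac{2\epsilon\sin\beta\,\bigl(2(1+1/\epsilon)M_{0n}^2-1\bigr)}{(\epsilon+2)^2\tan\beta\, M_{0n}^2}>0,
\end{equation*}
which requires only $u(\beta)<0$ and $2(1+1/\epsilon)M_{0n}^2-1>0$ (the latter is forced by \eqref{Shockcondition}), not the entropy inequality. So the paper's version of \eqref{A14} is insensitive to the sign of $M_{0n}^2-1$, while yours is conditional on admissibility; since the corollary lives inside the weak-entropy-solution framework of Proposition \ref{Prop1}, where \eqref{RH1.6} holds, your extra hypothesis is available and the argument is sound — it just proves a slightly narrower statement. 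What your route buys in exchange is transparency: the mechanism ($M_{0n}>1$ makes the flow behind the shock subsonic in the tangential direction and tilts the velocity toward the cone) is visible, rather than hidden in an algebraic cancellation.
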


    \begin{proof} 
    	By direct computation, the first and the third equation in \eqref{Equ1} yield that 
    	\begin{equation}\label{A5}
    	\frac{\rho'}{\rho}=-\frac{u'+u\cot\theta+2w}{u}
    	\end{equation}
    	and 
    	\begin{equation}\label{A6}
    \frac{p'}{\rho}=-(uw+uu')
    	\end{equation}
    	respectively.
    	By the forth equation in \eqref{Equ1} and the fifth equation in \eqref{Shockcondition}, \eqref{E2} is reduced to 
    	\begin{equation}\label{A15}
    	p=\frac{\epsilon}{\epsilon+1}\,\rho\,(E_0-\frac{1}{2}|V|^2).
    	\end{equation} 
    	Then, dividing \eqref{A15} by $\rho$, and differentiating it, one has
    	\begin{equation}\label{A4}
    	\left\{
    	\begin{aligned}
    	&\frac{p}{\rho}=\frac{\epsilon}{1+\epsilon}(E_0-\frac{1}{2}|V|^2),\\
    	&(\frac{p}{\rho})'=\frac{p'}{\rho}-\frac{p}{\rho}\frac{\rho'}{\rho}= -\frac{\epsilon}{\epsilon+1}u(u'+w),
    	\end{aligned}
    	\right.
    	\end{equation} 
    	where we have used the fact that $w'=u$.  Thus, by \eqref{A6} and \eqref{A4}, direct computation shows that 
    	\begin{equation}
    	(\frac{p}{\rho^{\epsilon+1}})'=\frac{1}{\rho^{\epsilon}}(\frac{p'}{\rho}-(1+\epsilon)\frac{p}{\rho}\frac{\rho'}{\rho})=0.
    	\end{equation} 
	
To prove \eqref{A14}, notice that the second equation in \eqref{Equ1} implies that \begin{align}\label{eq243new}
(|V|^2)'=ww'+uu'=u(w+u'),\end{align} while $u(\beta)$ and $w(\beta)$ are given by \eqref{Shockcondition}, thus what left is to calculate $u'(\beta).$ 
Substituting \eqref{A5}, \eqref{A6}, and the first equation of \eqref{A4} into the second equation of \eqref{A4}, we obtain  \begin{equation}
  u'=\frac{wu^2-\epsilon/2(2E_0-u^2-w^2)(u\cot\theta+2w)}{\epsilon/2(2E_0-u^2-w^2)-u^2},
  \end{equation} 
  which helps us to show that 
  \begin{equation}\label{A7}
  \begin{aligned}
  (|V|^2)'=u(w+u')&=u\frac{-\epsilon/2(2E_0-u^2-w^2)(w+u\cot\theta)}{\epsilon/2(2E_0-u^2-w^2)-u^2}\\&=u\frac{-c^2(w+u\cot\theta)}{c^2-u^2}\\&=u\frac{-(w+u\cot\theta)}{1-M^2_n}.
  \end{aligned}
  \end{equation}
  
Next we set $\theta=\beta$. By \eqref{Shockcondition}, we see
  \begin{equation}\label{A8}
  \begin{aligned}
  w(\beta)+u(\beta)\cot\beta=\frac{2(M^2_{0n}-1)\sin\beta}{(\epsilon+2)\tan\beta M^2_{0n}},
  \end{aligned}
  \end{equation}
  and 
  \begin{equation}\label{A9}
  \begin{aligned}
  1-M_n(\beta)^2=\frac{(1+2/\epsilon)(M^2_{0n}-1)}{2(1+1/\epsilon)M^2_{0n}-1}.
  \end{aligned}
  \end{equation}
Substituting \eqref{A8} and \eqref{A9} into \eqref{A7} yields
  \begin{equation}
  (|V|^2)'(\beta)=-u(\beta)\frac{2\epsilon\sin\beta(2(1+1/\epsilon)M^2_{0n}-1)}{(\epsilon+2)^2\tan\beta M^2_{0n}}>0,
  \end{equation}
  where $2(1+1/\epsilon)M^2_{0n}-1>0$ is given by the forth equation in \eqref{Shockcondition}, and $-u(\beta)>0$ is guaranteed by the second equation in \eqref{Shockcondition}. The proof is complete.
\end{proof}

\begin{remark}
Observing that $(\ref{Equ1})_4$ and \eqref{Shockcondition} imply that $E\equiv E_0$, then by \eqref{eq226new}, for fixed $\epsilon>0$, we have the following upper bound:
\begin{align}\label{eq250}
 \rho^{\gamma-1}<\frac{\epsilon E_0 \rho^{\gamma}({\beta})}{(\epsilon+1)p({\beta})}, ~~c^2<\epsilon E_0, ~~w<2E_0.\end{align} 
\end{remark}

In the following section, we focus on \eqref{Equ1}, \eqref{Shockcondition} and \eqref{Conecondition}. For clarity, we will refer to it as the ``direct problem" when $\theta_0$ is given and $\beta$ is to be determined, and as the ``inverse problem" when $\beta$ is given and $\theta_0$ is to be determined.  Both the direct and the inverse problems depend on the parameter $\epsilon>0$, or equivalently, on the Mach number $M_0$ of the incoming flow, cf. \eqref{2.7}.

\section{Existence of weak entropy solutions for supersonic flows past cones}\label{s3}
In this section, we aim to  solve the  direct problem by firstly considering the  inverse problem, and then studying the continuity of the map from the shock-front's half-vertex angle $\beta$ to the cone's half-vertex angle $\theta$. 

\subsection{Solvability of the inverse problem}
\subsubsection{Monotonicity of the flow behind  of shock} \label{s3.1}
   
\begin{lemma}\label{Lemma111}
	For the inverse problem with given shock-front $\theta=\beta_0$, and $\epsilon\in(0,\frac{\sin^2\beta_0}{E'}),$ suppose that there exist $\rho,\, u,\, w,\, E\in C^1((\beta_0-\kappa,\, \beta_0])$ for some $\kappa\in(0,\, \beta_0)$, with $\rho>0, \, u\neq 0,\,  M_n\doteq{|u|}/{c}\neq1$, and they solve \eqref{Equ1}.  Then $\rho$ is monotonically decreasing in $(\beta_0-\kappa,\, \beta_0).$ 
\end{lemma}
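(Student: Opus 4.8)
The plan is to show that $\rho'/\rho<0$ throughout $(\beta_0-\kappa,\beta_0)$, which is exactly the asserted monotonicity (in $\theta$). The starting point is to repackage the relations already established in Corollary~\ref{Cor1}. Combining \eqref{A5} with \eqref{A7} — the latter rewritten, after dividing by $u\neq0$, as $w+u'=-(w+u\cot\theta)/(1-M_n^2)$ — and eliminating $u'$, a short computation collapses the right-hand side of \eqref{A5} into
\begin{equation}
\frac{\rho'}{\rho}=\frac{(w+u\cot\theta)\,M_n^2}{u\,(1-M_n^2)}.
\end{equation}
Writing $B\doteq w+u\cot\theta$, the sign of $\rho'$ is therefore governed by the three factors $B$, $u$, and $1-M_n^2$ (since $\rho>0$ and $M_n^2>0$), and it remains to pin down each sign on the whole interval.

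Next I would read off the signs at the shock from \eqref{Shockcondition}. The hypothesis $\epsilon\in(0,\sin^2\beta_0/E')$ is precisely what forces $M_{0n}^2=M_0^2\sin^2\beta_0=\sin^2\beta_0/(\epsilon E')>1$ by \eqref{2.7}; equivalently, it is the condition under which the entropy inequality $p(\beta_0)>p_0$ holds. From the second line of \eqref{Shockcondition} we get $u(\beta_0)<0$; from \eqref{A9} together with $M_{0n}^2>1$ we get $1-M_n^2(\beta_0)>0$; and from \eqref{A8} together with $M_{0n}^2>1$ we get $B(\beta_0)>0$. The signs of $u$ and of $1-M_n^2$ then persist on all of $(\beta_0-\kappa,\beta_0]$ by continuity and the intermediate value theorem, using exactly the standing assumptions $u\neq0$ and $M_n\neq1$, so that neither quantity can change sign without first vanishing.

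The remaining, and genuinely delicate, point is to propagate $B>0$, since $B$ is not assumed to be nonvanishing a priori. Here the key observation is that $B$ itself satisfies a homogeneous linear ODE. Differentiating $B=w+u\cot\theta$ and using $w'=u$ gives $B'=\cot\theta\,(u'-u\cot\theta)$; substituting the explicit expression for $u'$ from the proof of Corollary~\ref{Cor1} (with $c^2=\tfrac{\epsilon}{2}(2E_0-u^2-w^2)$) and factoring, the numerator collapses to a multiple of $B$, yielding
\begin{equation}
B'=\cot\theta\,\frac{u^2-2c^2}{c^2-u^2}\,B.
\end{equation}
The coefficient $\cot\theta\,(u^2-2c^2)/(c^2-u^2)$ is continuous on $(\beta_0-\kappa,\beta_0]$ because $\theta$ stays in $(0,\pi/2)$, $c^2>0$ (as $M_n$ is well defined), and $c^2-u^2=c^2(1-M_n^2)\neq0$ by $M_n\neq1$. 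The integrating-factor representation $B(\theta)=B(\beta_0)\exp\!\big(-\int_\theta^{\beta_0}\cot s\,\tfrac{u^2-2c^2}{c^2-u^2}\,\mathrm{d}s\big)$ then shows that $B$ cannot vanish and retains the sign of $B(\beta_0)>0$ throughout the interval.

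With $B>0$, $u<0$, $1-M_n^2>0$, $\rho>0$, and $M_n^2>0$ all established on $(\beta_0-\kappa,\beta_0)$, the displayed formula for $\rho'/\rho$ is strictly negative there, so $\rho$ is strictly decreasing, as claimed. I expect the main obstacle to be exactly the factorization of $B'$ that reveals its linear-homogeneous structure: without it, one cannot rule out $B$ changing sign in the interior, and then the sign of $\rho'$ could in principle flip. Everything else is a matter of tracking signs that are either fixed by the Rankine–Hugoniot data at $\beta_0$ or preserved by the non-degeneracy assumptions $u\neq0$, $M_n\neq1$.
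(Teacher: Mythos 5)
Your proof is correct, and while it rests on the same pivotal quantity as the paper's --- the sign of $B=w+u\cot\theta$, equivalently of the paper's $q^{\perp}=B\sin\theta$ in \eqref{A12} --- the mechanism is genuinely different. The paper argues by contradiction/continuation: starting from $(|V|^2)'(\beta_0)>0$ (see \eqref{A14}), it supposes a first interior zero $\theta_1$ of $(|V|^2)'$, deduces from \eqref{A7} that $q^{\perp}(\theta_1)=0$, then shows $(q^{\perp})'=(|V|^2)'\cos\theta/u<0$ on $(\theta_1,\beta_0)$, whence $q^{\perp}(\theta_1)>q^{\perp}(\beta_0)>0$, a contradiction; finally it converts $(|V|^2)'>0$ into $\rho'<0$ through the relation $2(1+\epsilon)(p/\rho^{2})\rho'+(|V|^2)'=0$ of \eqref{AA4}, which comes from \eqref{A13}. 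You instead produce a closed-form sign identity $\rho'/\rho=BM_n^2/\bigl(u(1-M_n^2)\bigr)$ (which checks out from \eqref{A5} and \eqref{A7}), and, crucially, the factorization $u'-u\cot\theta=(u^2-2c^2)B/(c^2-u^2)$, showing that $B$ solves the linear homogeneous ODE $B'=\cot\theta\,\frac{u^2-2c^2}{c^2-u^2}\,B$; sign propagation of $B$ then follows from the integrating factor with no contradiction argument. This is also correct: your coefficient equals $-\cot\theta\bigl(1+\frac{1}{1-M_n^2}\bigr)$, which is exactly the linear ODE that the paper's $q^{\perp}$ implicitly satisfies but never exploits as such. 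The endpoint signs you read off from \eqref{Shockcondition}, \eqref{A8}, \eqref{A9}, and the continuity/nondegeneracy argument for the signs of $u$ and $1-M_n^2$, coincide with the paper's. What your route buys is transparency: every sign is explicit, and the delicate step (sign propagation of $B$) becomes structural rather than a first-zero contradiction. What the paper's route buys is the intermediate statement $(|V|^2)'>0$ on the whole interval, which it re-uses in Corollary \ref{Corollary111}; note, however, that this also drops out of your argument immediately, since $B>0$, $u<0$, $1-M_n^2>0$ give $(|V|^2)'=-uB/(1-M_n^2)>0$ by \eqref{A7}.
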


\begin{proof}
	We first clarify that $\epsilon\in(0,\, \frac{\sin^2\beta_0}{E'})$ implies $M_{0n}>1.$ Indeed, by the definition of Mach number, from \eqref{2.6}, we see 
	$$E'\doteq E_0-\frac{1}{2}=\frac{1}{\epsilon M_{0}^2}.$$ 
Thus, by the definition  $M_{0n}\doteq M_0\sin\beta_0,$ it follows that $$M_{0n}^2=\frac{\sin^2\beta_0}{E'\epsilon},$$ which gives that $M_{0n}>1$ if $\epsilon\in(0,\frac{\sin^2\beta_0}{E'}).$ Furthermore, it follows from \eqref{A9} that
	\begin{equation}\label{AA6}
	M_n(\beta_0)<1.
	\end{equation}

	We next use reduction to absurdity to prove that \begin{align}\label{eq31new}\frac{\mathrm{d}|V|^2}{\mathrm{d}\theta}>0,\quad \theta\in(\beta_0-\kappa,\beta_0).\end{align}  In fact, recalling \eqref{A14} (with $\beta$ being $\beta_0$ here) and supposing that there exists a $\theta_1\in(\beta_0-\kappa,\beta_0)$ such that 
	\begin{equation}
	\left\{
	\begin{aligned}
	&(|V|^2)'(\theta)>0,\qquad \theta\in(\theta_1,\beta_0],\\ &(|V|^2)'(\theta_1)=0,
	\end{aligned}
	\right.
	\end{equation}
	 then \eqref{eq243new} gives that 
	\begin{equation}\label{A10}
	u(\theta_1)(w(\theta_1)+u'(\theta_1))=0.
	\end{equation}
	By the condition that $u(\theta)\neq0, M^2_n\neq 1$ for $\theta\in(\beta_0-\kappa,\beta_0),$ we derive from \eqref{A7} that 
	\begin{equation}
	w(\theta_1)+u(\theta_1)\cot\theta_1=0,
	\end{equation}
	which further gives that at $\theta=\theta_1,$ the velocity is along the $x_1$-direction; that is,  
	\begin{equation}\label{A11}
	q^{\perp}(\theta_1)=0,
	\end{equation}
	with
	\begin{equation}\label{A12}
	q^{\perp}(\theta)\doteq w\sin\theta+u\cos\theta.
	\end{equation} 
	From \eqref{Shockcondition}, it is easy to check that $q^\perp(\beta_0)>0$ provided  $M_{0n}>1$. 
However, thanks to $(\ref{Equ1})_2$, direct computation shows that for $\theta\in(\theta_1,\beta_0),$
	\begin{equation}
	\begin{aligned}
	(q^{\perp})'(\theta)&=(w(\theta)+u'(\theta))\cos\theta\\&=\frac{(|V|^2)'(\theta)\cos\theta}{u(\theta)}<0,
	\end{aligned}
	\end{equation} 
	where we use the fact that $u(\theta)<0$ for $\theta\in(\beta_0-\kappa,\beta_0),$ which is derived from the assumptions that $u(\theta)\in C^1((\beta_0-\kappa,\,\beta_0]), \, u\neq0$, and the fact that  $u(\beta_0)<0$ (cf. \eqref{Shockcondition}).  Thus, we have 
	\begin{equation}
	q^{\perp}(\theta_1)>q^{\perp}(\beta_0)>0,
	\end{equation}
	which is a contradiction to \eqref{A11}. So we proved \eqref{eq31new}. 
	
	Differentiating \eqref{A15} and applying the fact $(\frac{p}{\rho^{\epsilon+1}})'=0$ in Corollary \ref{Cor1}, we have 
	\begin{equation}\label{AA4}
	2(1+\epsilon)\frac{p}{\rho^2}\rho'+(|V|^2)'=0,
	\end{equation}
	which, with \eqref{eq31new}, gives that 
	\begin{equation}
	\rho'(\theta)<0,\quad\ \theta\in (\beta_0-\kappa,\beta_0].
	\end{equation}
Then the proof is complete.
\end{proof}

\begin{corollary} \label{Corollary111}
Suppose that $(\rho,\, u, \,w,\, E),\, \epsilon,\, \beta_0$ is the same as in Lemma \ref{Lemma111}. Then $w,\, c\doteq\sqrt{\gamma p/\rho}$ and $u$ decrease in $(\beta_0-\kappa,\, \beta_0]$, and $M_n$ increases in $(\beta_0-\kappa,\, \beta_0]$. 
\end{corollary}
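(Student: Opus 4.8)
The plan is to read off all four monotonicity statements from the three facts already secured in the proof of Lemma \ref{Lemma111}, namely the strict inequality $(|V|^2)'>0$ on $(\beta_0-\kappa,\beta_0)$ (established as \eqref{eq31new}, and at $\beta_0$ by \eqref{A14}), the sign $u<0$, and $\rho'<0$. I would treat the assertions in the order $w$, $c$, $u$, $M_n$, so that each step can feed the next. I start with $w$: the second equation of \eqref{Equ1} gives $w'=u<0$, so $w$ is strictly decreasing. I would also record the positivity of $w$ on the whole interval, since it is the one extra ingredient needed later: from \eqref{Shockcondition} we have $w(\beta_0)=\cos\beta_0>0$ (using $\beta_0<\pi/2$), and as $w$ decreases in $\theta$ it follows that $w(\theta)\ge w(\beta_0)>0$ for $\theta\le\beta_0$.

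Next, for $c$, I would express the sound speed through $|V|^2$. Combining $c^2=\gamma p/\rho$ with $\gamma=\epsilon+1$ and the first line of \eqref{A4} yields $c^2=\epsilon\,(E_0-\tfrac12|V|^2)$. Differentiating gives $(c^2)'=-\tfrac{\epsilon}{2}(|V|^2)'<0$, so $c$ is strictly decreasing. This step uses only $(|V|^2)'>0$ and is independent of the rest.

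For $u$, I would avoid the full ODE for $u'$ and instead use the already-derived identity \eqref{eq243new}, that is $(|V|^2)'=u\,(w+u')$. Since $(|V|^2)'>0$ and $u<0$, this forces $w+u'<0$; combined with the positivity $w>0$ from the first step, I obtain $u'<-w<0$, so $u$ is strictly decreasing. Finally, for $M_n$, I write $M_n=|u|/c=(-u)/c$: the numerator $-u$ is increasing (because $u<0$ and $u'<0$) while the denominator $c$ is positive and decreasing, so the quotient $M_n$ is strictly increasing.

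The main difficulty here is not any single computation but the sign bookkeeping carried out in the correct dependency order, since the monotonicity of $u$ rests on the positivity of $w$, and that of $M_n$ rests on the monotonicity of both $u$ and $c$. Beyond that, everything reduces to reading off signs from \eqref{eq243new}, \eqref{A4}, and the conclusions of Lemma \ref{Lemma111}, so I expect no genuine obstacle.
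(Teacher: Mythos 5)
Your proposal is correct and takes essentially the same route as the paper's proof: $w'=u<0$ together with $w(\beta_0)=\cos\beta_0>0$, the sign of $(c^2)'$ read off from $c^2=\gamma p/\rho$ and the first line of \eqref{A4} (equivalently your relation $c^2=\epsilon(E_0-\tfrac12|V|^2)$), the identity \eqref{eq243new} combined with $u<0$ and $w>0$ to force $u'<-w<0$, and the increasing-positive-numerator over decreasing-positive-denominator argument for $M_n=-u/c$. The paper's proof additionally records $p'<0$ and $0<M_n<1$ along the way (facts used later in Theorem \ref{t4} rather than in this corollary's statement), so omitting them is harmless here.
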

\begin{proof}
	1.~Since $u(\theta)\in C^1((\beta_0-\kappa,\beta_0]), u\neq0, u(\beta_0)<0,$ it holds that \begin{align}\label{eq311newadd1}u(\theta)<0,\ \theta\in(\beta_0-\kappa,\, \beta_0],\end{align} which, with $(\ref{Equ1})_2$, gives 
	\begin{equation}\label{AA1}
	w'(\theta)<0, \ \theta\in(\beta_0-\kappa,\,\beta_0].
	\end{equation} 
Then by $w(\beta_0)=\cos\beta_0>0$ from \eqref{Shockcondition}, we infer that 
\begin{align}\label{eq312new}w>0 ~\text{on}~ (\beta_0-\kappa,\,\beta_0].\end{align}
	
	2.~By Corollary \ref{Cor1} and Lemma \ref{Lemma111}, one checks that 
	\begin{equation}\label{AA2}
	p'(\theta)=(\frac{p}{\rho^{\epsilon+1}}\rho^{\epsilon+1})'(\theta)=(1+\epsilon)\frac{p}{\rho}\rho'(\theta)<0,\ \theta\in(\beta_0-\kappa,\,\beta_0].
	\end{equation} 
	
	3.~By $\eqref{A4}_2$, \eqref{eq243new} and \eqref{eq31new}, a direct computation yields 
	\begin{equation}\label{AA5}
c'(\theta)=\frac{\gamma(p(\theta)/\rho(\theta))'}{2c(\theta)}=\frac{-\epsilon(|V|^2(\theta))'}{2c(\theta)}<0,~\text{for}~\theta \in(\beta_0-\kappa,\,\beta_0].
	\end{equation} 
	
	4.~Also, by \eqref{eq243new}, \eqref{eq31new},  \eqref{eq311newadd1} and  \eqref{eq312new}, it follows  
	\begin{equation}\label{AA3}
	u'(\theta)<0,~\text{for}~ \theta\in(\beta_0-\kappa,\,\beta_0].
	\end{equation}
	
	5.~As $M_{n}\doteq {|u|}/{c}$ and $u(\theta)<0, \, \theta\in(\beta_0-\kappa,\beta_0],$  \eqref{AA3} and \eqref{AA5} imply that 
	\begin{equation}
M_n'(\theta)>0,~~~\text{for}~ \theta\in(\beta_0-\kappa,\,\beta_0]. 
	\end{equation} 
	Then by \eqref{AA6}, we infer 
	\begin{equation}\label{eq319new1}
0<M_n(\theta)<1,~\text{for}~ \theta\in(\beta_0-\kappa,\,\beta_0].
	\end{equation}
The proof is  complete.
\end{proof}

 \subsubsection{Solution of the inverse problem}
\begin{theorem}\label{t4}
	Let $\beta_*, \, \beta^*,\,  \bar{\beta}$ be constants with $0<\beta_*<\bar{\beta}<\beta^*<\frac{\pi}{2}.$ There exists an $\epsilon_*>0$ depending on  $\beta_*,\, \beta^*$, such that if $\epsilon\in(0,\epsilon_*),$  then the inverse problem with parameter $\epsilon$ and shock-front $\theta=\bar{\beta}$  is solvable. Furthermore, it holds that
	\begin{equation}\label{A24}
	\lim_{\epsilon\to 0}\bar{\theta}(\epsilon)=\bar{\beta},
	\end{equation}
	where $\theta=\bar{\theta}(\epsilon)$ is the cone's surface with respect to the fixed shock-front $\theta=\bar{\beta}$, and is  solved from the inverse problem corresponding to the parameter $\epsilon\in(0, \epsilon_*)$.
\end{theorem}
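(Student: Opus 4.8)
The plan is to solve the inverse problem by integrating the reduced system \eqref{Equ1} backward in $\theta$ from the shock data \eqref{Shockcondition} prescribed at $\theta=\bar\beta$, and to identify the cone as the first angle $\bar\theta(\epsilon)<\bar\beta$ at which the slip condition \eqref{Conecondition}, i.e. $u=0$, is attained. First I would set $\epsilon_*\doteq\sin^2\beta_*/E'$; then for every $\bar\beta\in(\beta_*,\beta^*)$ and every $\epsilon\in(0,\epsilon_*)$ one has $M_{0n}^2=\sin^2\bar\beta/(E'\epsilon)>1$ by \eqref{2.7}, so the data \eqref{Shockcondition} are well defined and satisfy $\rho(\bar\beta)>0$, $u(\bar\beta)<0$, $w(\bar\beta)=\cos\bar\beta>0$, and $M_n(\bar\beta)<1$ via \eqref{AA6}. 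Since $E\equiv E_0$ and $p=\frac{\epsilon}{\epsilon+1}\rho(E_0-\frac12|V|^2)$, the system \eqref{Equ1} can be written in normal form for $(\rho,u,w)$ with a right-hand side that is smooth wherever $\rho>0$ and $c^2-u^2=c^2(1-M_n^2)\neq 0$; Picard--Lindel\"of then yields a unique $C^1$ solution on a maximal interval $(\theta_m,\bar\beta]$ on which $\rho>0$, $u<0$, and $M_n<1$.

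On this interval the monotonicity results of Lemma \ref{Lemma111} and Corollary \ref{Corollary111} apply: read in the direction of decreasing $\theta$, the quantities $\rho,p,w,c$ increase and remain bounded by \eqref{eq250}, $u$ increases strictly toward $0$, and $M_n$ decreases while staying in $(0,1)$. Hence the solution is confined to a compact subset of $\{\rho>0,\ M_n<1\}$ bounded away from both singular sets, so the continuation can only terminate through $u\uparrow 0$ or through $\theta_m=0$. Excluding the latter is the crux of the argument, and here I would use the axial velocity component $q^{\perp}=w\sin\theta+u\cos\theta$ of \eqref{A12}: as computed in the proof of Lemma \ref{Lemma111}, $(q^{\perp})'=(w+u')\cos\theta=(|V|^2)'\cos\theta/u<0$ on $(\theta_m,\bar\beta]$, so $q^{\perp}$ increases as $\theta$ decreases and stays $\ge q^{\perp}(\bar\beta)=\sin\bar\beta\cos\bar\beta\cdot 2(M_{0n}^2-1)/((\epsilon+2)M_{0n}^2)>0$. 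On the other hand $(|V|^2)'>0$ forces $w+u\cot\theta>0$ through \eqref{A7}, so $0>u>-w\tan\theta$; as $w$ is bounded, this would drive $u\to0$ and thus $q^{\perp}\to0$ as $\theta\to0^+$, contradicting $q^{\perp}\ge q^{\perp}(\bar\beta)>0$. Therefore $u$ must vanish at some $\bar\theta(\epsilon)\in(0,\bar\beta)$; there $M_n=0<1$ and $\rho,w,c$ remain admissible, so $(\rho,u,w,E)$ on $[\bar\theta(\epsilon),\bar\beta]$ solves the inverse problem, the entropy inequality $p(\bar\beta)>p_0$ being automatic from $M_{0n}>1$.

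For the limit \eqref{A24}, I would exploit that $(|V|^2)'>0$ together with $u<0$ forces, via \eqref{eq243new}, the pointwise slope bound $u'<-w$ on $(\bar\theta(\epsilon),\bar\beta)$; since $w'=u<0$ gives $w\ge w(\bar\beta)=\cos\bar\beta$ throughout $[\bar\theta(\epsilon),\bar\beta]$, this yields the $\epsilon$-independent bound $u'<-\cos\bar\beta<0$. Integrating from $\bar\theta(\epsilon)$, where $u=0$, to $\bar\beta$ then gives
\begin{equation*}
|u(\bar\beta)|=-\int_{\bar\theta(\epsilon)}^{\bar\beta}u'\,\mathrm{d}\theta>\cos\bar\beta\,\big(\bar\beta-\bar\theta(\epsilon)\big),\qquad\text{so}\qquad \bar\beta-\bar\theta(\epsilon)<\frac{|u(\bar\beta)|}{\cos\bar\beta}.
\end{equation*}
Finally, writing $A\doteq\epsilon M_{0n}^2=\sin^2\bar\beta/E'$ (independent of $\epsilon$), the formula \eqref{Shockcondition} gives $|u(\bar\beta)|=\frac{(2+A)\epsilon}{(\epsilon+2)A}\sin\bar\beta=O(\epsilon)$, which tends to $0$ as $\epsilon\to0$ uniformly for $\bar\beta\in(\beta_*,\beta^*)$, the uniform lower bound $\cos\bar\beta\ge\cos\beta^*>0$ being exactly where the constraint $\beta^*<\pi/2$ enters. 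Hence $\bar\theta(\epsilon)\to\bar\beta$, which is \eqref{A24}. The main obstacle is the global continuation step of the second paragraph; once the solution is known to persist in the admissible regime until $u$ first vanishes, both solvability and the limit follow from the monotonicity already established and the explicit shock data.
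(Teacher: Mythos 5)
Your proposal is correct, and its overall scaffolding coincides with the paper's: integrate \eqref{A25} backward from the shock data \eqref{Shockcondition}, use Lemma \ref{Lemma111} and Corollary \ref{Corollary111} to confine the solution to a compact set away from $\{\rho=0\}$ and the sonic set $\{M_n=1\}$, reduce to the dichotomy ``$u$ vanishes at some $\bar\theta>0$'' versus ``$u<0$ all the way down to $\theta=0$'', and prove \eqref{A24} by integrating $u'<-w\le-\cos\bar\beta$ to get $\bar\beta-\bar\theta(\epsilon)\le|u(\bar\beta)|/\cos\bar\beta=O(\epsilon)$; this last step is exactly the paper's \eqref{A22}--\eqref{A23} (your expression for $|u(\bar\beta)|$, with denominator $(\epsilon+2)\sin^2\bar\beta$, is in fact the correct one — the paper's \eqref{A23} carries a harmless typo, and both versions give the same uniform bound in terms of $\beta_*,\beta^*$). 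Where you genuinely depart from the paper is the exclusion of the second alternative, which is indeed the crux. The paper excludes it quantitatively: it applies the same $O(\epsilon)$ estimate with $\bar\theta$ arbitrary, so that the extra smallness built into its choice $\epsilon_*=\min\bigl\{\sin^2\beta_*/E',\,\beta_*\sin^2\beta_*/((2E'+\sin^2\beta^*)\tan\beta^*)\bigr\}$ forces $\bar\beta\le\tfrac12\beta_*$, contradicting $\bar\beta>\beta_*$. You instead argue qualitatively with the axial velocity $q^\perp=w\sin\theta+u\cos\theta$: by \eqref{A7} the established inequality $(|V|^2)'>0$ gives $w+u\cot\theta>0$, hence $0>u>-w\tan\theta$, and boundedness of $w$ squeezes $u\to0$ and therefore $q^\perp\to0$ as $\theta\to0^+$, contradicting the monotonicity $(q^\perp)'<0$ and $q^\perp(\bar\beta)>0$ (facts already proved and used inside Lemma \ref{Lemma111}, valid whenever $M_{0n}>1$). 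This buys something concrete: your route needs no second smallness condition, so $\epsilon_*=\sin^2\beta_*/E'$ suffices, i.e., the inverse problem is solved on a strictly larger range of $\epsilon$ — essentially whenever the Rankine--Hugoniot data make sense — whereas the paper's route reuses one estimate twice at the price of shrinking $\epsilon_*$. Both arguments rest on the same monotonicity machinery, so your continuation step is sound as stated.
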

\begin{proof}
	1. We rewrite \eqref{Equ1} in the matrix form 
	\begin{equation}\label{A19}
	T\hat{U}'=B,
	\end{equation}
	 where 
	\begin{equation}
	T=\begin{pmatrix}
	u & \rho & 0 \\
	0 & 0 & 1 \\
	\gamma p/\rho^2 & u & 0
	\end{pmatrix},\quad \det T =c^2-u^2,	\end{equation}
	and 
	\begin{equation}
	\hat{U}=(\rho, \, u, \, w)^{\top},\, \ B=(\rho u\cot\theta+2\rho w,\, u, \, wu)^{\top}.
	\end{equation}
	The initial data $\hat{U}(\bar{\beta})$ is given by \eqref{Shockcondition}, with $\beta$ there replaced by $\bar{\beta}$, and depending on the parameter $\epsilon$. Notice that   $M_{0n}$ is also determined by $\epsilon$.   
Recall by the proof of Lemma \ref{Lemma111} that $\epsilon\in(0,\frac{\sin^2\beta_0}{E'})$ ensures  $M_{0n}>1.$
Then a direct computation yields
	\begin{equation}\label{A20old}
	\det T|_{\theta=\bar{\beta}} =c^2(\bar{\beta})-u^2(\bar{\beta})=c^2(\bar{\beta})\frac{(1+\epsilon/2)(M^2_{0n}-1)}{2(1+1/\epsilon)M^2_{0n}-1}>0.
	\end{equation}
Thus, assuming continuity of the solutions, there exists a positive constant $\kappa_1$, such that the equations  \eqref{A19} can be expressed  as follows:  
	\begin{equation}\label{A25}
	\hat{U}'=g(\theta,\hat{U}), \qquad \theta\in(\kappa_1, \bar{\beta}],
	\end{equation}
	where
	\begin{equation}
	g(\theta,\hat{U})=\begin{pmatrix}
	g_1(\theta,\hat{U}) \\
    g_2(\theta,\hat{U})\\
    g_3(\theta,\hat{U})
	\end{pmatrix}=\begin{pmatrix}
	\displaystyle{\frac{-\rho u^2\cot\theta-\rho wu}{c^2-u^2}} \\
	\displaystyle{\frac{c^2u\cot\theta+2c^2w-u^2w}{c^2-u^2}}\\
	u
	\end{pmatrix}.
	\end{equation}
One has 
\begin{align*}
&\frac{\partial g_1}{\partial \rho}=\displaystyle{\frac{(-u^2\cot\theta-wu)(c^2-u^2)+( u^2\cot\theta+ wu)\epsilon c^2}{(c^2-u^2)^2}},\\
&\frac{\partial g_1}{\partial u}=\displaystyle{\frac{(-2\rho u\cot\theta-\rho w)(c^2-u^2)-2u(\rho u^2\cot\theta+\rho wu)}{(c^2-u^2)^2}},\\
& \frac{\partial g_1}{\partial w}=\displaystyle{\frac{-\rho u}{c^2-u^2}},\\
&\frac{\partial g_2}{\partial \rho}=\displaystyle{\frac{\epsilon c^2\big((u\cot\theta+2w)(c^2-u^2)-c^2u\cot\theta-2c^2w+u^2w)\big)}{\rho~(c^2-u^2)^2}},\\
&\frac{\partial g_2}{\partial u}=\displaystyle{\frac{(c^2\cot\theta-2uw)(c^2-u^2)+2u(c^2u\cot\theta+2c^2w-u^2w)}{(c^2-u^2)^2}},\\
&\frac{\partial g_2}{\partial w}=\displaystyle{\frac{(2c^2-u^2)}{c^2-u^2}},\\
&\frac{\partial g_3}{\partial \rho}=0,\quad \frac{\partial g_3}{\partial u}=1,\quad \frac{\partial g_3}{\partial w}=0,
\end{align*}
Introduce the set 	
	\begin{equation}
	\begin{aligned}
	\Pi(\kappa_1)\doteq\Big\{(&\theta,\, \rho,\, u,\, w,\,c, \, M_n):\, \kappa_1<\theta<\bar{\beta}, \, \rho^{\gamma-1}(\bar{\beta})<\rho^{\gamma-1}<\frac{\epsilon E_0 \rho^{\gamma}(\bar{\beta})}{(\epsilon+1)p(\bar{\beta})},\\&u(\bar{\beta})<u<0, \,w(\bar{\beta})<w<2E_0, \,c(\bar{\beta})<c^2<\epsilon E_0,\,0<M_n<1\Big\},
	\end{aligned}
	\end{equation}	
where the upper bounds come from \eqref{eq250} and \eqref{eq319new1}. Then it holds that 
	\begin{equation}\label{A21}
	\sum_{j=1}^{3}\left(\norm{g_j}_{L^{\infty}(\Pi)}+\norm{\mathrm{D}_{\hat{U}}g_j}_{L^{\infty}(\Pi)}\right)<+\infty.
	\end{equation}
Hence, recalling that $\rho(\bar{\beta})>0$ and $u(\bar{\beta})<0$, by the local solvability of Cauchy problem of  ODE, as well as Lemma \ref{Lemma111} and Corollary \ref{Corollary111}, for  $\kappa_1$ close to $\bar{\beta}$,  we can solve a unique $C^1$ (actually analytical) solution $\hat{U}$ for  $\theta\in(\kappa_1,\bar{\beta})$, and $(\theta, \, \hat{U}(\theta),\,  c(\theta),\,  M_n(\theta))\in\Pi(\kappa_1)$.  Hence $\Pi(\kappa_1)$ is an invariant region for \eqref{A19}.  It then holds, as $(\theta,\,\rho,\,u,\,w,\,c,\,M_n)\in\Pi(\kappa_1),$ that 
	\begin{equation}\label{A20}
	\det T =c^2-u^2>c^2(\bar{\beta})-u^2(\bar{\beta})>0,
	\end{equation}
 so we could always use the expression \eqref{A25}  and extend the solution until one of the following holds:
 \begin{itemize}
 \item [i)] $\kappa_1=0$ and $u(\theta)<0$ for all $\theta\in(0, \bar{\beta}]$;
 
 \item [ii)] There is a $\bar{\theta}>0$ so that $\kappa_1=\bar{\theta}$ and  $u(\theta)<0$ for $\theta\in(\bar{\theta},\, \bar{\beta}]$, and  $u(\bar{\theta})=0.$
 \end{itemize}
(It is clear that from the restriction $E\equiv E_0$, the lower bound of  $w$ rejects that $\rho$ and $c^2$ reach their upper bound in $\Pi(\kappa_1)$, while the lower bound of $\rho$  eliminates the possibility that $w$ achieves its upper bound. So thanks to \eqref{eq319new1}, the above i) and ii) are the only options for a solution to escape from the invariant region.)	
	
2.~We show that the  case i) won't happen; in other words,  case ii) happens: there is a  $\bar{\theta}\in(0,\bar{\beta})$ so that   $u(\bar{\theta})=0$.	
	
	Indeed,  suppose that for any $\bar{\theta}>0$, it holds that $u(\bar{\theta})<0$. From the third equation in \eqref{Equ1} and Corollary \ref{Corollary111},  we have 
\begin{equation}\label{eq326new}
u'(\theta)=-\frac{p'}{\rho u}(\theta)-w(\theta)<-w(\theta)<-w(\bar{\beta})=-\cos\bar{\beta},\quad \theta\in(\bar{\theta}, \bar{\beta}),
\end{equation}
thus 	\begin{equation}\label{A22}
    \begin{aligned}
    	u(\bar{\beta})&\le u(\bar{\beta})-u(\bar{\theta})=\int_{\bar{\theta}}^{\bar{\beta}}u'(\theta)\,\mathrm{d}\theta\\&\leq \int_{\bar{\theta}}^{\bar{\beta}}(-\cos\bar{\beta})d\theta=-(\bar{\beta}-\bar{\theta})\cos\bar{\beta},
    \end{aligned}
	\end{equation} 
	which provides 
	\begin{align}\label{A23}	|\bar{\beta}-\bar{\theta}|&\le \frac{|u(\bar{\beta})|}{\cos\bar{\beta}}=\frac{2E'+\sin^2\bar{\beta}}{2E'M_0^2\sin^2\bar{\beta}+1}\cdot\tan\bar{\beta}\nonumber\\&=\epsilon\frac{2E'+\sin^2\bar{\beta}}{2\sin^2\bar{\beta}+\epsilon}\cdot\tan\bar{\beta}\nonumber\\&\leq\epsilon\frac{2E'+\sin^2{\beta^*}}{2\sin^2{\beta_*}}\cdot\tan{\beta^*}.
	\end{align}
Here, for the first equality, we used  the fact that 
\begin{equation*}
u(\bar{\beta})=\frac{2+\epsilon M_{0n}^2}{(\epsilon+2)M_{0n}^2}\cdot u_0=-\frac{2E'+\sin^2\bar{\beta}}{2E'M_0^2\sin^2\bar{\beta}+1}\cdot\sin\bar{\beta},
\end{equation*}
which follows from \eqref{Shockcondition}; and for the second equality, we used \eqref{2.7}.

Therefore,  as a result of \eqref{A23}, taking $$\epsilon_*=\min\Big\{\frac{\sin^2\beta_*}{E'}, \frac{\beta_*\sin^2\beta_*}{(2E'+\sin^2\beta^*)\tan\beta^*}\Big\},$$ then if $\epsilon\in(0,\epsilon_*),$ one has 
$$|\bar{\beta}-\bar{\theta}|<\frac12\beta_*.$$
Since this holds for any $\bar{\theta}>0$ for case i), it follows that
$$\bar{\beta}\le\frac12\beta_*,$$
which contradicts to the requirement that $\beta_*<\bar{\beta}$. 
   
Hence we have shown that there exists a $\bar{\theta}$ that depends on $\epsilon>0$, and the $C^1$ flow field in $(\bar{\theta},\, \bar{\beta})$ that solves the inverse problem.

3. Furthermore, for case ii), noticing that $u(\bar{\theta})=0$,   \eqref{A24} follows directly from \eqref{A23}. The proof is complete.	
\end{proof}

\subsection{Solvability of the direct problem}

Based on Theorem \ref{t4}, for given $0<\epsilon<\epsilon_*$, we define the following  mapping from the conical shock-front to the cone's surface
\begin{equation}
\begin{aligned}
\mathcal{T}: ~(\beta_*,\,\beta^*) \longrightarrow (0,\, \beta^*),~ \bar{\beta}\longmapsto\bar{\theta}.
\end{aligned}
\end{equation}
It has been shown that $\mathcal{T}(\bar{\beta})<\bar{\beta}$.

\begin{proposition}\label{Proposition1}
	The mapping $\mathcal{T}$ is continuous.
\end{proposition}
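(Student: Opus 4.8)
The plan is to view $\bar{\theta}=\mathcal{T}(\bar{\beta})$ as a hitting time for the $u$-component of the solution of the Cauchy problem \eqref{A25}, and to establish its continuous dependence on $\bar{\beta}$ by the implicit function theorem. Fix $\epsilon\in(0,\epsilon_*)$, and for each $\bar{\beta}\in(\beta_*,\beta^*)$ denote by $\hat{U}(\theta;\bar{\beta})=(\rho,u,w)(\theta;\bar{\beta})$ the unique $C^1$ solution of \eqref{A25} with the data \eqref{Shockcondition} prescribed at $\theta=\bar{\beta}$. Since the data \eqref{Shockcondition} depend analytically on $\bar{\beta}$, and since the vector field $g$ is $C^1$ on the invariant region $\Pi(\kappa_1)$ (its partial derivatives were displayed in the proof of Theorem \ref{t4} and are bounded there because $\det T=c^2-u^2\ge c^2(\bar{\beta})-u^2(\bar{\beta})>0$ by \eqref{A20}), the classical theorem on continuous, indeed $C^1$, dependence of ODE solutions on initial data and parameters shows that $(\theta,\bar{\beta})\mapsto\hat{U}(\theta;\bar{\beta})$ is $C^1$ on its domain of definition.

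First I would record the transversality of the crossing at the exit point. By Theorem \ref{t4}, the solution stays in $\Pi(\kappa_1)$ with $u<0$ until $\theta$ reaches $\bar{\theta}=\mathcal{T}(\bar{\beta})$, where $u(\bar{\theta};\bar{\beta})=0$; moreover $u^2<c^2$ throughout, so $\det T>0$ and $g$ remains smooth up to and across $\theta=\bar{\theta}$, whence $\hat{U}(\cdot;\bar{\beta})$ is $C^1$ at $\bar{\theta}$. Passing to the limit $\theta\to\bar{\theta}^{+}$ in the estimate \eqref{eq326new} gives
\begin{equation*}
\frac{\partial}{\partial\theta}u(\bar{\theta};\bar{\beta})=u'(\bar{\theta})\le-\cos\bar{\beta}<0,
\end{equation*}
so $u$ crosses zero transversally. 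In addition, Corollary \ref{Corollary111} shows $u'(\theta;\bar{\beta})<0$ on the whole interval $(\bar{\theta},\bar{\beta}]$, so $u(\cdot;\bar{\beta})$ is strictly monotone in $\theta$ and therefore $\bar{\theta}$ is the unique zero of $u$ encountered as $\theta$ decreases from $\bar{\beta}$.

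With these facts in hand, I would apply the implicit function theorem to $F(\theta,\bar{\beta})\doteq u(\theta;\bar{\beta})$ at an arbitrary base point $(\bar{\theta}_0,\bar{\beta}_0)$ with $\bar{\theta}_0=\mathcal{T}(\bar{\beta}_0)$. Since $F$ is $C^1$, $F(\bar{\theta}_0,\bar{\beta}_0)=0$, and $\partial_\theta F(\bar{\theta}_0,\bar{\beta}_0)=u'(\bar{\theta}_0)<0$, there is a $C^1$ function $\bar{\beta}\mapsto\theta(\bar{\beta})$ defined near $\bar{\beta}_0$ with $\theta(\bar{\beta}_0)=\bar{\theta}_0$ and $u(\theta(\bar{\beta});\bar{\beta})=0$. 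The strict monotonicity of $u(\cdot;\bar{\beta})$ guarantees that this zero is the only one, hence $\theta(\bar{\beta})=\mathcal{T}(\bar{\beta})$ for $\bar{\beta}$ close to $\bar{\beta}_0$; in particular $\mathcal{T}$ is continuous, even $C^1$, at $\bar{\beta}_0$. As $\bar{\beta}_0$ was arbitrary, $\mathcal{T}$ is continuous on $(\beta_*,\beta^*)$.

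The main obstacle is that the interval of definition $(\bar{\theta},\bar{\beta})$ itself varies with $\bar{\beta}$, so $\mathcal{T}$ is an exit-time map rather than a fixed evaluation of the flow; the resolution is precisely the transversality $u'(\bar{\theta})<0$ furnished by \eqref{eq326new}, which turns the defining relation $u(\bar{\theta};\bar{\beta})=0$ into a regular level set amenable to the implicit function theorem. A secondary point requiring care is to ensure that the solution remains in $\Pi(\kappa_1)$, so that $g$ stays smooth and the continuous-dependence theorem applies, all the way down to the exit point; this is already supplied by the invariant-region argument of Theorem \ref{t4}, together with the observation that $u=0$ lies strictly inside the subsonic set $u^2<c^2$ where $\det T\neq0$.
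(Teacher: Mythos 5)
Your proof is correct, and it rests on the same two pillars as the paper's argument: continuous dependence of solutions of \eqref{A25} on the shock position $\bar{\beta}$ (entering through the data \eqref{Shockcondition}), and the uniform transversality $u'\le-\cos\bar{\beta}<0$ supplied by \eqref{eq326new}. The difference is in the packaging. The paper argues sequentially: for $\beta_n\to\beta_0$ it writes $0=u(\mathcal{T}(\beta_0);\beta_0)-u(\mathcal{T}(\beta_n);\beta_n)$, splits this into a data-dependence term plus an integral of $u'$ along the segment joining $\mathcal{T}(\beta_0)$ and $\mathcal{T}(\beta_n)$, and divides by that integral --- in effect an implicit function theorem carried out by hand, which needs only $C^0$ dependence on the data and never evaluates a solution below its own exit point (this is precisely why the paper splits $\beta_n$ into the two subsequences $\mathcal{B}_1$, $\mathcal{B}_2$). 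Your route invokes the actual implicit function theorem, which costs a bit more: you must upgrade to $C^1$ joint dependence of $(\theta,\bar{\beta})\mapsto u(\theta;\bar{\beta})$ (legitimate, since $g$ is $C^1$ wherever $\det T=c^2-u^2>0$ and the data are analytic in $\bar{\beta}$), extend the solution slightly past the exit point (legitimate, since $u=0$ lies strictly inside $\{u^2<c^2\}$), and --- the one step you gloss --- identify the IFT zero $\theta(\bar{\beta})$ with the exit time $\mathcal{T}(\bar{\beta})$. Strict monotonicity of $u(\cdot;\bar{\beta})$ is only known, via Corollary \ref{Corollary111}, on $(\mathcal{T}(\bar{\beta}),\bar{\beta}]$, so by itself it does not exclude the possibility $\theta(\bar{\beta})<\mathcal{T}(\bar{\beta})$; to close this, observe that on any compact interval $[\bar{\theta}_0+\eta,\bar{\beta}_0]$ the base solution satisfies $u\le-m<0$, hence by continuous dependence nearby solutions remain negative there, which forces $\mathcal{T}(\bar{\beta})$ into the IFT rectangle, where the zero set of $u$ is exactly the graph $\theta=\theta(\bar{\beta})$. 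With that one line added your argument is complete, and it buys something the paper's proof does not: $\mathcal{T}$ is not merely continuous but $C^1$.
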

\begin{proof}
	For fixed $\beta_0\in (\beta_*,\, \beta^*)$ and any sequence $(\max\{\beta_*,\, \mathcal{T}(\beta_0)\},\, \beta^*)\ni\beta_n\to\beta_0,$ $(n\to\infty)$,  we classify $\beta_n$ into two subsequences $$\tilde{\beta}_{n}\in\mathcal{B}_1\doteq\{\beta_n:\mathcal{T}(\beta_n)\leq\mathcal{T}(\beta_0)\}, ~\quad~\tilde{\tilde{\beta}}_n\in\mathcal{B}_2\doteq\{\beta_n:\mathcal{T}(\beta_n)>\mathcal{T}(\beta_0)\}.$$ Let $U(\theta;\beta)$ be the solution of the inverse problem with the given shock position $\beta$. Then we have
	\begin{equation}
	\begin{aligned}
	0&=u(\mathcal{T}(\beta_0);\beta_0)-u(\mathcal{T}(\tilde{\beta}_n);\tilde{\beta}_n)\\&=\{u(\mathcal{T}(\beta_0);\beta_0)-u(\mathcal{T}(\beta_0);\tilde{\beta}_n)\}+\{u(\mathcal{T}(\beta_0);\tilde{\beta}_n)-u(\mathcal{T}(\tilde{\beta}_n);\tilde{\beta}_n)\}\\&=\{u(\mathcal{T}(\beta_0);\beta_0)-u(\mathcal{T}(\beta_0);\beta_n)\}\\
	&\quad+\int_{0}^{1}u'(\tau\mathcal{T}(\beta_0)+(1-\tau)\mathcal{T}(\tilde{\beta}_n);\tilde{\beta}_n)\,\mathrm{d}\tau\,(\mathcal{T}(\beta_0)-\mathcal{T}(\tilde{\beta}_n)),
	\end{aligned}
	\end{equation}
	which gives that
	\begin{equation}\label{Continousdependence 1}
	\begin{aligned}
	|\mathcal{T}(\beta_0)-\mathcal{T}(\tilde{\beta}_n)|\leq\frac{|u(\mathcal{T}(\beta_0);\beta_0)-u(\mathcal{T}(\beta_0);\tilde{\beta}_n)|}{\left|\int_{0}^{1}u'(\tau\mathcal{T}(\beta_0)+(1-\tau)\mathcal{T}(\tilde{\beta}_n);\tilde{\beta}_n)\,\mathrm{d}\tau\right|}.
	\end{aligned}
	\end{equation}
Then, by the continuous dependence of solutions on initial data for Cauchy problem of ODE \eqref{A25},  and the fact that $u'$ is uniformly bounded away from zero (cf. \eqref{eq326new}), it follows from \eqref{Continousdependence 1} that
	\begin{equation}\label{CD1}
	\lim_{\tilde{\beta}_n\to\beta_0}\mathcal{T}(\tilde{\beta}_n)=\mathcal{T}(\beta_0).
	\end{equation}
	Similar argument also yields 
	\begin{equation}\label{CD2}
	\lim_{\tilde{\tilde{\beta}}_n\to\beta_0}\mathcal{T}(\tilde{\tilde{\beta}}_n)=\mathcal{T}(\beta_0).
	\end{equation}
	Thus the map $\mathcal{T}$ is continuous.
	\end{proof}

Based on Theorem \ref{t4} and Proposition \ref{Proposition1}, we establish the solvability of direct problem as follows.

\begin{theorem}\label{T1}
	For any given cone's surface $\theta=\theta_0$ with  $\theta_0\in(0,\,\frac{\pi}{2}),$ there exists an $\epsilon^*>0$ such that  the direct problem is solvable for $\epsilon\in(0,\,\epsilon^*)$. That is, for each fixed $\epsilon$, there exists a $\beta_0$ such that $\mathcal{T}(\beta_0)=\theta_0.$ Furthermore, it holds that 
	\begin{equation}\label{A26}
	\lim_{\epsilon\to0}|\beta_0-\theta_0|=0.
	\end{equation}

\end{theorem}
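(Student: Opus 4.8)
The plan is to realize the prescribed cone angle $\theta_0$ as a value of the continuous map $\mathcal{T}$ by an intermediate value argument, exploiting the two-sided control on $\mathcal{T}(\beta)-\beta$ that is already in hand. First I would fix auxiliary angles $0<\beta_*<\theta_0<\beta^*<\tfrac{\pi}{2}$, chosen to depend only on $\theta_0$, so that the target lies strictly interior to the admissible shock-angle interval $(\beta_*,\beta^*)$. Theorem \ref{t4} then supplies a threshold $\epsilon_*=\epsilon_*(\beta_*,\beta^*)$ such that, for every $\epsilon\in(0,\epsilon_*)$, the inverse problem with any shock $\theta=\bar\beta\in(\beta_*,\beta^*)$ is solvable, and hence $\mathcal{T}$ is well defined on this interval; by Proposition \ref{Proposition1} it is continuous there.

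Next I would use the two inequalities pinning $\mathcal{T}(\beta)$ near $\beta$. On one side, it has already been recorded that $\mathcal{T}(\bar\beta)<\bar\beta$. On the other, the uniform estimate \eqref{A23} yields a constant $C=C(\beta_*,\beta^*,E')$ with
\begin{equation}\label{planbound}
\bar\beta-\mathcal{T}(\bar\beta)\le C\,\epsilon,\qquad \bar\beta\in(\beta_*,\beta^*).
\end{equation}
With these I would bracket $\theta_0$ between two values of $\mathcal{T}$. Any $\beta_b\in(\beta_*,\theta_0)$ gives $\mathcal{T}(\beta_b)<\beta_b<\theta_0$. Shrinking the threshold to some $\epsilon^*\le\epsilon_*$ with $C\epsilon^*<\beta^*-\theta_0$, I may for each $\epsilon\in(0,\epsilon^*)$ pick $\beta_a\in(\theta_0+C\epsilon,\,\beta^*)$, so that \eqref{planbound} gives $\mathcal{T}(\beta_a)\ge\beta_a-C\epsilon>\theta_0$. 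Continuity of $\mathcal{T}$ on $[\beta_b,\beta_a]$ and the intermediate value theorem then produce a $\beta_0\in(\beta_b,\beta_a)\subset(\beta_*,\beta^*)$ with $\mathcal{T}(\beta_0)=\theta_0$, which is exactly the asserted solvability of the direct problem for $\epsilon\in(0,\epsilon^*)$.

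Finally, the limit \eqref{A26} would follow immediately by applying \eqref{planbound} at the selected shock $\bar\beta=\beta_0$: since $\mathcal{T}(\beta_0)=\theta_0$,
\begin{equation}\label{planlimit}
|\beta_0-\theta_0|=\beta_0-\mathcal{T}(\beta_0)\le C\,\epsilon\longrightarrow 0\quad(\epsilon\to0).
\end{equation}
The one point requiring care is the order of quantifiers in choosing $\epsilon^*$: the constant $C$, and therefore the admissible window for $\beta_a$, depend on the fixed frame $\beta_*,\beta^*$, so these must be selected (as functions of $\theta_0$ alone) before $\epsilon^*$ is shrunk; once that is arranged, all the bounds are uniform in $\bar\beta\in(\beta_*,\beta^*)$ and the argument closes. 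I expect this bookkeeping — guaranteeing that $\theta_0$ genuinely lands in the range of $\mathcal{T}$ for every small $\epsilon$, not merely for one fixed $\epsilon$ — to be the main, though mild, obstacle; the substantive analytic work has already been carried out in Theorem \ref{t4} and Proposition \ref{Proposition1}.
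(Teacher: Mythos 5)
Your proposal is correct and follows essentially the same route as the paper: both arguments combine the solvability of the inverse problem and the $O(\epsilon)$ estimate \eqref{A23} from Theorem \ref{t4} with the continuity of $\mathcal{T}$ (Proposition \ref{Proposition1}) and the intermediate value theorem, then reuse \eqref{A23} at $\beta_0$ to get \eqref{A26}. The only cosmetic difference is the bracketing: the paper uses $\mathcal{T}(\theta_0)<\theta_0<\mathcal{T}(\theta_1)$ for a fixed $\theta_1\in(\theta_0,\pi/2)$ via the limit \eqref{A24}, while you bracket with points $\beta_b<\theta_0<\beta_a$ chosen from the quantitative bound $C\epsilon$ — both are valid and equivalent in substance.
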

\begin{proof}
	Let $\theta_1$ be a constant in $(\theta_0,\, \frac{\pi}{2}).$ By \eqref{A24} in Theorem \ref{t4}, there exists a positive constant $\epsilon_1$ such that for $\epsilon\in(0,\, \epsilon_1),$ $\mathcal{T}(\theta_1)\in(\theta_0,\theta_1),$ namely $\theta_0\in(\mathcal{T}(\theta_0),\, \mathcal{T}(\theta_1)).$ By the continuity of $\mathcal{T},$ there is a $\beta_0\in(\theta_0,\, \theta_1)$ such that $\mathcal{T}(\beta_0)=\theta_0.$ 
	
	Furthermore, by \eqref{A23} in the proof of Theorem \ref{t4}, one has
	\begin{equation}
	|\beta_0-\theta_0|\leq\epsilon\frac{2E'+\sin^2{\theta_1}}{2\sin^2{\theta_0}}\cdot\tan{\theta_1},
	\end{equation}
	which implies \eqref{A26}. The proof is complete.
\end{proof}

As a corollary of  Theorem \ref{T1}, we have the following. 
\begin{corollary}
		For any given cone's surface $\theta=\theta_0\in(0,\,\frac{\pi}{2}),$ there exists an $\epsilon^*>0$ such that \textsc{Problem 1} has a  weak entropy solution for any $\epsilon\in(0,\,\epsilon^*)$. Indeed, $U$ is given by \eqref{ss} with $U|_{\theta_0<\theta<\beta_0}$ determined by Theorem \ref{T1}. 
\end{corollary}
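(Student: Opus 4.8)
The plan is to read this corollary as the converse of Proposition \ref{Prop1} supplied with the output of Theorem \ref{T1}. For the given $\theta_0$, Theorem \ref{T1} furnishes, for each $\epsilon$ below a threshold, a shock angle $\beta_0$ with $\mathcal{T}(\beta_0)=\theta_0$ together with a $C^1$ (in fact analytic) solution $(\rho,u,w,E)$ of \eqref{Equ1} on $(\theta_0,\beta_0)$ that satisfies the shock relations \eqref{Shockcondition} at $\theta=\beta_0$ and the cone relation \eqref{Conecondition} $u(\theta_0)=0$. First I would fix $\epsilon^{*}$ to be the threshold of Theorem \ref{T1}, shrinking it if necessary so that the resulting shock satisfies $M_{0n}=M_0\sin\beta_0>1$; since $\beta_0>\theta_0$ and $M_{0n}^2=\sin^2\beta_0/(E'\epsilon)$, it suffices to also impose $\epsilon<\sin^2\theta_0/E'$. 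I would then assemble $U$ exactly as in \eqref{ss}: the uniform state $U_0$ on $\Omega_+\doteq\{\beta_0<\theta\le\pi\}$ and the ODE solution on $\Omega_-\doteq\{\theta_0<\theta<\beta_0\}$. The fields $\rho,w,E$ and $\tilde u=u\vec{\partial}_\theta$ are bounded (they remain in the invariant region $\Pi$) and integrable on $\Omega$, so the regularity demanded by Definition \ref{Def1} is met.

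It remains to verify the integral identities \eqref{3.2.1}--\eqref{3.2.4} and the entropy condition. I would split each integral over $\Omega$ along the shock circle into its contributions over $\Omega_+$ and $\Omega_-$. On $\Omega_+$ the field is the restriction to $S^2$ of a genuine constant state of the three-dimensional Euler system \eqref{E1}, hence it is a conical flow solving \eqref{eq1}--\eqref{eq4} classically; on $\Omega_-$ the axisymmetric ansatz \eqref{ss} reduces \eqref{eq1}--\eqref{eq4} precisely to \eqref{Equ1}, which holds pointwise there. Testing against arbitrary $\psi\in C^1(S^2)$ and $\Psi$, I would integrate by parts on each piece; the volume terms vanish because the equations hold classically, and only boundary contributions on $\theta=\beta_0$ and on the cone circle $C$ survive.

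The contributions at $\theta=\beta_0$ are exactly the one-sided fluxes whose balance is the Rankine--Hugoniot system \eqref{RH1.1}--\eqref{RH1.5}; these hold because \eqref{Shockcondition} was \emph{derived} from them in Proposition \ref{Prop1}, so the two one-sided shock integrals cancel. At the cone, the convective momentum flux $\int_{C}\rho(\tilde u,\mathbf{n})(\tilde u,\Psi)\,\mathrm{d}s$ vanishes since $u(\theta_0)=0$ forces $(\tilde u,\mathbf{n})|_{C}=0$, while integrating the pressure term by parts leaves precisely $\int_{C}p(\mathbf{n},\Psi)\,\mathrm{d}s$, the cone term on the right of \eqref{3.2.4}; the scalar balances \eqref{3.2.1}--\eqref{3.2.3} carry no cone term for the same reason. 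This is the computation of Proposition \ref{Prop1} run in reverse. Finally, the solution is smooth on $\Omega_-$, so the entropy condition is vacuous in the interior and reduces at the shock to $p(\beta_0)>p_0$; by the fourth line of \eqref{Shockcondition} this is equivalent to $M_{0n}>1$, which the choice of $\epsilon^{*}$ guarantees.

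The only point needing genuine care — as opposed to a mechanical reversal of Proposition \ref{Prop1} — is the bookkeeping of boundary terms: one must confirm that the sum of the one-sided flux integrals across $\theta=\beta_0$ coming from $\Omega_+$ and $\Omega_-$ annihilates under \eqref{RH1.1}--\eqref{RH1.5}, and that no spurious cone term is produced in the scalar equations. Because the constant upstream state and the downstream ODE solution are each smooth up to $\theta=\beta_0$, their one-sided traces are well defined and the integrations by parts are legitimate, so I expect no essential difficulty beyond this accounting, and the corollary then follows at once with $U$ as in \eqref{ss}.
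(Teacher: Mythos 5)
Your proposal is correct and follows the same route the paper intends: the corollary is presented there as an immediate consequence of Theorem \ref{T1}, precisely because Proposition \ref{Prop1} reduces the weak formulation for the ansatz \eqref{ss} to the ODE problem \eqref{Equ1}, \eqref{Shockcondition}, \eqref{Conecondition}, so reversing that reduction (volume terms vanish classically, shock terms cancel by \eqref{RH1.1}--\eqref{RH1.5}, cone terms vanish or produce the pressure term since $u(\theta_0)=0$, entropy from $M_{0n}>1$) is exactly the paper's implicit argument. Your write-up simply makes explicit the bookkeeping the paper leaves to the reader.
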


\section{Measure solutions and the hypersonic limit}\label{sec4}
\subsection{Measure solutions for hypersonic-limit flow}

Theorem \ref{T1} establishes the solvability of  the direct problem and the convergence of the corresponding shock-front to the boundary surface of the cone as Mach number of the incoming flow goes to infinity, i.e., $\epsilon$ goes to zero.  In this section, we further clarify the convergence of the flow states $U$ to a singular Radon measure solution  with concentration of mass and momentum in an infinite-thin boundary layer, and demonstrate the Newtonian sine-squared pressure law for cones in  hypersonic aerodynamics. 

Referring to \cite[Theorem 4.1]{QY}, we directly present the Radon measure solution for the singular case (i.e., $\epsilon=0$). 

\begin{theorem}[hypersonic-limit flows passing cones] \label{t4.1}
    For hypersonic limit flows passing the cone with surface $\theta=\theta_0\in(0,\, \frac{\pi}{2}),$  (i.e., $\epsilon=0$ in  \textsc{Problem $1'$}),   there is a unique \footnote{The uniqueness is shown in the class of Radon measure solutions with the structure exhibited by \eqref{49} and \eqref{4.10}, namely the weights of the Dirac measures are unique.} Radon measure solution (in the sense of  Definition \ref{Def2}) $\mathcal{U}=(\varrho,\, \tilde{u},\, w,\, E)$ given by 
     \begin{align}
    &\varrho=\rho_0\mathrm{d}A\mres{\Omega}+\frac{1}{2}\tan\theta_0\delta_C,~\quad \tilde{u}=\tilde{u}_0I_\Omega, \label{49}\\
    & w=w_0I_\Omega+\cos\theta_0I_C,~\quad E=E_0I_\Omega+E_0I_C, \label{4.10}
    \end{align}
in which $(\rho_0,\tilde{u}_0, w_0, E_0)$ is given by \eqref{upcoming}, $C=\{(r,\theta,\phi):\, r=1,\, \theta=\theta_0\}$, and $I_{C}(\cdot)$ is the indicator function of the set $C$.  
Furthermore, we have
\begin{equation}
    W_C=\sin^2\theta_0, \label{4.11}
\end{equation}
which is  the Newtonian sine-squared pressure law.
\end{theorem}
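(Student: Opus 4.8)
The plan is to verify directly that the candidate $\mathcal{U}=(\varrho,\tilde u,w,E)$ given by \eqref{49}--\eqref{4.10} satisfies every requirement of Definition \ref{Def2}, and then to read off the Dirac weight and $W_C$ from the resulting identities. First I would construct the nine flux measures $m_a,\dots,\wp$ that condition (iii) forces upon us. Because $\tilde u=\tilde u_0 I_\Omega$ vanishes on the concentration set $C$, while $w=\cos\theta_0$ and $E=E_0$ survive there, the measures $m_a=\tilde u\,\varrho$, $m_r=w\tilde u\,\varrho$ and $m_t=(\tilde u\otimes\tilde u)\varrho$ carry no singular part, whereas $n_a=w\varrho$, $n_e=Ew\varrho$ and $n_r=w^2\varrho$ each acquire an explicit $\delta_C$ contribution; I would set $\wp=0$, which is legitimate since the bulk pressure $p_0=\tfrac{\epsilon}{\epsilon+1}E'\to0$ in the hypersonic limit, and $\wp=0$ trivially satisfies $\wp\ll\varrho$ and $\wp\ge0$ in (i). Each of these measures then splits as an absolutely continuous piece on $\Omega$ (the uniform incoming flow) plus a weighted $\delta_C$.

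The heart of the argument is the four integral identities in Definition \ref{Def2}(ii). On $\Omega$ the uniform flow is a classical conical solution, so it satisfies the \emph{interior} (strong) forms of \eqref{eq1}--\eqref{eq4}; the crucial point is that it does \emph{not} satisfy the slip condition on $C$, so I must integrate by parts over $\Omega$ keeping the genuine flux across $C$ rather than invoking the weak form \eqref{3.2.4}. For each identity I would write $\int_\Omega(\,\cdot\,,\nabla\psi)\,\mathrm{d}A=-\int_\Omega\mathrm{div}(\cdot)\,\psi\,\mathrm{d}A+\int_C(\,\cdot\,,\nu)\,\psi\,\mathrm{d}s$ with outer normal $\nu=-\vec{\partial}_\theta$, use the interior equation to cancel the bulk term against the right-hand side over $\Omega$, and match the leftover boundary integral on $C$ against the singular part of the corresponding flux measure. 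The mass identity fixes the Dirac weight: the normal mass flux $(\tilde u_0,\nu)=-u_0=\sin\theta_0$ across $C$ must balance $2\cos\theta_0$ times the weight, forcing the weight to be $\tfrac12\tan\theta_0$. The energy identity is then $E_0$ times the mass identity, and the radial identity is consistent once one substitutes $\mathrm{div}(w_0\tilde u_0)=-2w_0^2+|\tilde u_0|^2$ from \eqref{eq3}.

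The decisive computation is the tensor momentum identity, which produces $W_C$. With $\wp=0$ it reads $\langle m_t,\mathrm{D}\Psi\rangle=3\langle m_r,\Psi\rangle+\langle W_C\mathbf{n}_C\delta_C,\Psi\rangle$. Integrating $\int_\Omega(\tilde u_0\otimes\tilde u_0,\mathrm{D}\Psi)\,\mathrm{d}A$ by parts and substituting the interior relation $\mathrm{div}(\tilde u_0\otimes\tilde u_0)=-3w_0\tilde u_0$ cancels the bulk against $3\langle m_r,\Psi\rangle$, leaving exactly the boundary term $\int_C\big((\tilde u_0\otimes\tilde u_0)\nu,\Psi\big)\,\mathrm{d}s$. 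Since $(\tilde u_0\otimes\tilde u_0)\nu=\tilde u_0(\tilde u_0,\nu)=-u_0^2\,\vec{\partial}_\theta=-\sin^2\theta_0\,\vec{\partial}_\theta=\sin^2\theta_0\,\mathbf{n}_C$ on $C$ (with $\mathbf{n}_C=-\vec{\partial}_\theta$ pointing into the cone), comparison yields $W_C=\sin^2\theta_0$, the Newtonian sine-squared law. This step---correctly extracting the normal momentum flux $\rho_0(V\cdot n)^2=\sin^2\theta_0$ carried into the cone---is the main obstacle, since it requires the conically reduced interior momentum equation for $\tilde u_0\otimes\tilde u_0$ (whose $3\rho w\tilde u$ source comes from the radial derivatives of the reduction) together with the correct orientation of $\mathbf{n}_C$. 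Finally, (iv) is immediate in the limit ($p=0$ a.e.\ on the a.c.\ part, state equation trivial, entropy condition vacuous as there is no interior discontinuity), and uniqueness follows because within the prescribed class \eqref{49}--\eqref{4.10} the boundary matching in each identity is linear in the Dirac weight and in $W_C$, hence determines them uniquely.
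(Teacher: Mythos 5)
Your verification is correct, but note that the paper itself contains no proof of Theorem \ref{t4.1}: it simply records the solution and defers the argument to the cited reference [QY, Theorem 4.1] (``Referring to \cite[Theorem 4.1]{QY}, we directly present the Radon measure solution\dots''). So there is no in-paper proof to compare against; your direct check against Definition \ref{Def2} is exactly the kind of argument the paper omits, and it is consistent with what the paper does record, namely the explicit flux measures \eqref{4.1.30}--\eqref{4.1.34}: your $m_a,m_r,m_t,n_t$ carry no singular part, your $n_a,n_e,n_r$ carry the weights $\tfrac12\sin\theta_0$, $\tfrac12\sin\theta_0E_0$, $\tfrac12\sin\theta_0\cos\theta_0$, and $\wp=0$, all matching the paper's formulas. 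Your sign bookkeeping is also right where it matters: although \eqref{upcoming} displays $u_0=\sin\theta$, the jump conditions \eqref{Shockcondition} and the identity below \eqref{A23} show $u_0=-\sin\theta$, which is what you use when computing $(\tilde u_0,\nu)=\sin\theta_0$ and $(\tilde u_0\otimes\tilde u_0)\nu=\sin^2\theta_0\,\mathbf{n}_C$; this yields the weight $\tfrac12\tan\theta_0$ in \eqref{49} and the Newtonian law \eqref{4.11}, with the bulk terms cancelling via the interior forms of \eqref{eq1}--\eqref{eq4} at $p_0=0$.

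One small repair: your uniqueness sentence says the matching is ``linear in the Dirac weight and in $W_C$,'' but if one takes the footnote seriously (all Dirac weights unknown, say $\varrho$-weight $w_1$, $w|_C=w_2$, $E|_C=w_3$), the matchings are bilinear: mass gives $2w_1w_2=\sin\theta_0$, the radial identity gives $2w_1w_2^2=\sin\theta_0\cos\theta_0$, and only the pair together separates $w_2=\cos\theta_0$ from $w_1=\tfrac12\tan\theta_0$; energy then gives $w_3=E_0$ and the tensor identity gives $W_C$ uniquely. This is a two-line fix, not a gap in the construction itself.
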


Now for given $\theta_0\in(0,\pi/2)$ and any $\epsilon\in(0,\epsilon^*) $ as given in Theorem \ref{T1}, we write the solved $\beta_0$ to be $\beta^\epsilon$,  
and denote $\Omega_1^\epsilon$ as the region (on the sphere $S^2$) ahead of the shock-front, namely $\theta>\beta^\epsilon$, and set $\Omega_2^\epsilon\doteq\Omega\setminus\Omega_1^\epsilon$ to be the region bounded by the shock-front and the cone on $S^2$.
Let $U^{\epsilon}=(\rho^{\epsilon},\, \tilde{u}^{\epsilon},\, w^{\epsilon},\, E^{\epsilon})$ be the weak entropy solution to \textsc{Problem $1'$} established in Theorem \ref{T1}. Then we have the following Radon measures on momentum, energy flux, momentum fluxes, mass, and pressure: 
\begin{align}
    & m_a^\epsilon=\rho_0\tilde{u}_0\mathrm{d}A\mres\Omega^{\epsilon}_1+\rho^\epsilon \tilde{u}^{\epsilon} \mathrm{d}A\mres\Omega^{\epsilon}_2,~ ~~~~~~~~~~ n_a^\epsilon=\rho_0w_0\mathrm{d}A\mres\Omega^{\epsilon}_1+\rho^\epsilon w^\epsilon \mathrm{d}A\mres\Omega^{\epsilon}_2 \label{4.1.35}\\
    & m_e^\epsilon=\rho_0\tilde{u}_0E_0\mathrm{d}A\mres\Omega^{\epsilon}_1+\rho^\epsilon E_0\tilde{u}^{\epsilon} \mathrm{d}A\mres\Omega^{\epsilon}_2,~ ~~~~ n_e^\epsilon=\rho_0w_0E_0\mathrm{d}A\mres\Omega^{\epsilon}_1+\rho^\epsilon \tilde{u}^{\epsilon} E_0\mathrm{d}A\mres\Omega^{\epsilon}_2,~ \label{4.1.36}\\
    & m_r^\epsilon=\rho_0\tilde{u}_0w_0\mathrm{d}A\mres\Omega^{\epsilon}_1+\rho^\epsilon \tilde{u}^{\epsilon} w^\epsilon \mathrm{d}A\mres\Omega^{\epsilon}_2,~ ~~~~ n_r^\epsilon=\rho_0w_0^2\mathrm{d}A\mres\Omega^{\epsilon}_1+\rho^\epsilon (w^\epsilon )^2\mathrm{d}A\mres\Omega^{\epsilon}_2,~\label{4.1.37}\\
    & m_t^\epsilon=\rho_0\tilde{u}_0\otimes \tilde{u}_0\mathrm{d}A\mres\Omega^{\epsilon}_1+\rho^\epsilon \tilde{u}^{\epsilon}\otimes \tilde{u}^{\epsilon} \mathrm{d}A\mres\Omega^{\epsilon}_2,~ \nonumber\\
    &n_t^\epsilon=\rho_0|\tilde{u}_0|^2\mathrm{d}A\mres\Omega^{\epsilon}_1+\rho^\epsilon |\tilde{u}^{\epsilon}|^2\mathrm{d}A\mres\Omega^{\epsilon}_2,~\label{4.1.38}\\
    & \varrho^\epsilon=\rho_0\mathrm{d}A\mres\Omega^{\epsilon}_1+\rho^\epsilon \mathrm{d}A\mres\Omega^{\epsilon}_2,~  ~~~~~~~~~~~~~~~~~~~~~~\wp^\epsilon=p_0\mathrm{d}A\mres\Omega^{\epsilon}_1+p^\epsilon \mathrm{d}A\mres\Omega^{\epsilon}_2.\label{4.1.39}
\end{align}
It is easily seen that $\mathcal{U}^{\epsilon}\doteq(\varrho^\epsilon, u^{\epsilon}, w^{\epsilon}, E^{\epsilon})$ is a Radon measure solution. For the singular case $\epsilon=0$, due to Theorem \ref{t4.1}, the corresponding measures turn out to be
\begin{align}
    & m_a=\rho_0\tilde{u}_0\mathrm{d}A\mres\Omega,\quad n_a=\rho_0w_0\mathrm{d}A\mres\Omega+\frac{1}{2}\sin\theta_0\delta_C,~ \label{4.1.30}\\
    & m_e=\rho_0\tilde{u}_0E_0\mathrm{d}A\mres\Omega,\quad  n_e=\rho_0w_0E_0\mathrm{d}A\mres\Omega+\frac{1}{2}\sin\theta_0E_0\delta_C,~ \label{4.1.31}\\
    & m_r=\rho_0\tilde{u}_0w_0\mathrm{d}A\mres\Omega,\quad  n_r=\rho_0w_0^2\mathrm{d}A\mres\Omega+\frac{1}{2}\sin\theta_0\cos\theta_0\delta_C,~\label{4.1.32}\\
    & m_t=\rho_0\tilde{u}_0\otimes \tilde{u}_0\mathrm{d}A\mres\Omega\quad n_t=\rho_0|\tilde{u}_0|^2\mathrm{d}A\mres\Omega,~\label{4.1.33}\\
    & \varrho=\rho_0\mathrm{d}A\mres\Omega+\frac{1}{2}\tan\theta_0\delta_C,\quad \wp=0.\label{4.1.34}
\end{align}

\begin{lemma}\label{lem41}
The measures defined in \eqref{4.1.35}-\eqref{4.1.39}  converge vaguely to the corresponding measures defined in \eqref{4.1.30}-\eqref{4.1.34} as $\epsilon\to0$. Furthermore, it holds that 
\begin{equation}
\lim_{\epsilon\to0}p^{\epsilon}(\theta_0)=W_C.
\end{equation}	
\end{lemma}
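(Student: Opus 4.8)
The plan is to prove the vague convergence by pairing every measure in \eqref{4.1.35}--\eqref{4.1.39} with an arbitrary $\psi\in C(S^2)$ (or continuous vector field $\Psi$) and splitting each integral over the outer region $\Omega^\epsilon_1=\{\theta>\beta^\epsilon\}$ and the boundary layer $\Omega^\epsilon_2=\{\theta_0<\theta<\beta^\epsilon\}$. On $\Omega^\epsilon_1$ the solution coincides with the fixed incoming flow $U_0$, and by Theorem \ref{T1} (see \eqref{A26}) we have $\beta^\epsilon\downarrow\theta_0$, so $\Omega^\epsilon_1\uparrow\Omega$ while the area of $\Omega^\epsilon_2$ tends to $0$; dominated convergence then sends every $\Omega^\epsilon_1$-integral to the corresponding $\mathrm{d}A\mres\Omega$ term in \eqref{4.1.30}--\eqref{4.1.34}. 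Thus the whole problem reduces to computing the limits of the $\Omega^\epsilon_2$-integrals and checking that they reproduce exactly the Dirac terms on $C$.

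First I would establish three facts about the collapsing layer, all from \eqref{Equ1}, \eqref{Shockcondition} and Corollary \ref{Corollary111}. (a) \emph{Uniform collapse of the velocity:} since $u(\theta_0)=0$, $u<0$, and $|u(\beta^\epsilon)|=O(\epsilon)\to0$ (from \eqref{Shockcondition}, \eqref{2.7}), one has $\sup_{\Omega^\epsilon_2}|u^\epsilon|\to0$; integrating $w'=u$ over the shrinking layer with $w(\beta^\epsilon)=\cos\beta^\epsilon\to\cos\theta_0$ gives $\sup_{\Omega^\epsilon_2}|w^\epsilon-\cos\theta_0|\to0$; and \eqref{A9} yields $M_n(\beta^\epsilon)\to0$, whence $M_n^\epsilon\to0$ uniformly by Corollary \ref{Corollary111}. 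Writing $u'=\big((M_n^2-2)w-u\cot\theta\big)/(1-M_n^2)$ from \eqref{Equ1} and using $M_n^\epsilon\le M_n(\beta^\epsilon)$ bounded away from $1$, the quantity $c^2-u^2=c^2(1-M_n^2)$ does not degenerate harmfully and $(u^\epsilon)'$ stays uniformly bounded. (b) \emph{The concentrated mass is explicit:} multiplying the first equation of \eqref{Equ1} by $\sin\theta$ gives the conservation form $(\rho u\sin\theta)'=-2\rho w\sin\theta$; integrating from $\theta_0$ to $\beta^\epsilon$, using $u(\theta_0)=0$ and the shock identity $\rho(\beta^\epsilon)u(\beta^\epsilon)=-\sin\beta^\epsilon$ (immediate from \eqref{Shockcondition}), yields
\[
\int_{\theta_0}^{\beta^\epsilon}\rho^\epsilon w^\epsilon\sin\theta\,\mathrm{d}\theta=\tfrac12\sin^2\beta^\epsilon\longrightarrow\tfrac12\sin^2\theta_0.
\]
(c) Since $w^\epsilon\to\cos\theta_0>0$ uniformly, dividing by $w^\epsilon$ gives $\int_{\theta_0}^{\beta^\epsilon}\rho^\epsilon\sin\theta\,\mathrm{d}\theta\to\frac12\tan\theta_0\sin\theta_0$, which together with $\sin\theta\ge\sin\theta_0$ shows $\int_{\theta_0}^{\beta^\epsilon}\rho^\epsilon\,\mathrm{d}\theta$ is bounded.

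With $\mathrm{d}A=\sin\theta\,\mathrm{d}\theta\,\mathrm{d}\phi$ and $\mathrm{d}s=\sin\theta_0\,\mathrm{d}\phi$ on $C$, each layer limit is read off by replacing the continuous test function by its trace on $C$ (valid since the layer shrinks to $C$). The densities carrying a factor $u^\epsilon$ or $(u^\epsilon)^2$, namely those of $m^\epsilon_a,m^\epsilon_e,m^\epsilon_r,m^\epsilon_t,n^\epsilon_t$, vanish in the limit by (a) and (c), matching the absence of a $\delta_C$-term in the corresponding limit measures. The densities $\rho^\epsilon w^\epsilon$, $\rho^\epsilon w^\epsilon E_0$, $\rho^\epsilon(w^\epsilon)^2$ and $\rho^\epsilon$ produce, via (b)--(c) and $w^\epsilon\to\cos\theta_0$, the Dirac weights $\tfrac12\sin\theta_0$, $\tfrac12\sin\theta_0E_0$, $\tfrac12\sin\theta_0\cos\theta_0$ and $\tfrac12\tan\theta_0$, which are precisely the coefficients of $\delta_C$ in $n_a,n_e,n_r,\varrho$. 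Finally $\wp^\epsilon$ concentrates to zero: $p_0=\frac{\epsilon}{\epsilon+1}E'\to0$ on $\Omega^\epsilon_1$, while on the layer $p^\epsilon$ is bounded and the area $\to0$. This establishes the asserted vague convergence.

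For the pressure on the cone I would integrate the momentum relation $p'=-\rho u(w+u')$ (third equation of \eqref{Equ1}) from $\theta_0$ to $\beta^\epsilon$, obtaining
\[
p^\epsilon(\theta_0)=p^\epsilon(\beta^\epsilon)+\int_{\theta_0}^{\beta^\epsilon}\rho^\epsilon u^\epsilon\big(w^\epsilon+(u^\epsilon)'\big)\,\mathrm{d}\theta.
\]
The integral is bounded by $\sup_{\Omega^\epsilon_2}|w^\epsilon+(u^\epsilon)'|\cdot|u(\beta^\epsilon)|\cdot\int_{\theta_0}^{\beta^\epsilon}\rho^\epsilon\,\mathrm{d}\theta\to0$ by (a)--(c), while a direct computation from \eqref{Shockcondition}, \eqref{2.6}, \eqref{2.7} gives $p^\epsilon(\beta^\epsilon)\to\sin^2\theta_0$; hence $p^\epsilon(\theta_0)\to\sin^2\theta_0=W_C$ by \eqref{4.11}. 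The main obstacle is exactly the uniform control of the collapsing layer in (a): one must show $u^\epsilon\to0$ and $M_n^\epsilon\to0$ uniformly (so that $(u^\epsilon)'$ remains bounded and $c^2-u^2$ does not degenerate) even though $\rho^\epsilon\to\infty$. The conservation form $(\rho u\sin\theta)'=-2\rho w\sin\theta$ together with $\rho(\beta)u(\beta)=-\sin\beta$ is the device that makes the concentrated mass exactly computable, and all the Dirac weights follow from it.
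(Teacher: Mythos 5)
Your proposal is correct and follows essentially the same route as the paper: the same splitting into $\Omega_1^\epsilon$ and the collapsing layer $\Omega_2^\epsilon$, the same key mass-flux identity $\int_{\theta_0}^{\beta^\epsilon}\rho^\epsilon w^\epsilon\sin\theta\,\mathrm{d}\theta=\frac12\sin^2\beta^\epsilon$ derived from the conservation form of $(\ref{Equ1})_1$, and the same use of the monotonicity in Corollary \ref{Corollary111} together with $\beta^\epsilon\to\theta_0$ to make $u^\epsilon$, $w^\epsilon-\cos\theta_0$, and the test-function oscillation vanish uniformly on the layer. Your treatment of $\lim_{\epsilon\to0}p^\epsilon(\theta_0)$ (integrating $p'=-\rho u(w+u')$ across the layer with a uniform bound on $u'$) is in fact a more explicit justification of the step the paper disposes of in one line via \eqref{Shockcondition}.
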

\begin{proof}
	We  present detailed  proof on $m^{\epsilon}_a\to m_a, \, n^{\epsilon}_a\to n_a, \, n^{\epsilon}_r\to n_r$ and $\varrho^{\epsilon}\to \varrho.$ The others can be proved similarly. 
	
	1.~According to Definition \ref{Def2} and the fact that $v_0=0$ (cf. \eqref{upcoming}), it holds that, for any $C^1$ vector field $\Psi$ on $S^2$, 
	\begin{align}
	& \langle m_a^\epsilon,\, \Psi\rangle=\int_{\Omega_1^\epsilon}\left(\Psi(\theta,\phi),\rho_0u_0\right)\,\mathrm{d}A+\int_{\Omega_2^\epsilon}\left(\Psi(\theta,\phi),\rho^\epsilon(\theta)u^\epsilon(\theta)\right)\,\mathrm{d}A,~\\
	& \langle m_a,\, \Psi\rangle=\int_{\Omega}\left(\Psi(\theta,\, \phi),\rho_0u_0\right)\,\mathrm{d}A.
	\end{align}
A direct computation shows that 
	\begin{equation}\label{A29}
	\begin{aligned}
	&\quad \quad~ \left|\langle m_a^\epsilon,\,\Psi\rangle- \langle m_a,\,\Psi\rangle\right|=\left|  \int_{\Omega_2^\epsilon}\left(\Psi (\theta,\phi),\big(\rho^\epsilon(\theta)u^\epsilon(\theta)-\rho_0u_0\big)\right)\,\mathrm{d}A\right|\\&
	\leq 
\norm{\Psi}_{L^\infty(S^2)}\norm{\frac{|u^\epsilon|}{w^\epsilon}}_{L^\infty(\Omega_2^{\epsilon})}\int_{\Omega_2^\epsilon}\rho^\epsilon(\theta)w^\epsilon(\theta)\,\mathrm{d}A+\norm{\Psi}_{L^\infty(S^2)}\norm{\rho_0u_0}_{L^\infty(\Omega)}\Big|\Omega_2^{\epsilon}\Big|.
	\end{aligned}
	\end{equation}
Here and in the sequel, $\Big|\Omega_2^{\epsilon}\Big|\doteq\mathrm{d}A\left(\Omega_2^{\epsilon}\right)$, with $\mathrm{d}A$ being the standard area measure on $S^2$.

We now estimate the right-hand-side of \eqref{A29} term by term. The estimate \eqref{A26} (where $\beta_0$ is replaced by $\beta^\epsilon$ now) implies that the area $|\Omega_2^\epsilon|\to0$ as $\epsilon\to0$. Thus the second  term in the right side of \eqref{A29} goes to $0$. For the first term, 
notice that by the conservation of mass, namely \eqref{eq1},  it follows directly the important identity  
\begin{equation}\label{**}
	\int_{\Omega_2^\epsilon}\rho^\epsilon(\theta)w^\epsilon(\theta)~\mathrm{d}A=2\pi\int_{\theta_0}^{\beta^{\epsilon}}\rho^\epsilon(\theta)w^\epsilon(\theta)\sin\theta~\mathrm{d}\theta=\pi \sin^2\beta^\epsilon. 
	\end{equation}
According to Corollary \ref{Corollary111} and \eqref{Shockcondition}, and recalling  $\beta^\epsilon\to\theta_0>0,$ it holds that 
	 \begin{equation}\label{A28}
	 \left|\frac{u^\epsilon(\theta)}{w^\epsilon(\theta)}\right|<\left|\frac{u^\epsilon(\beta^{\epsilon})}{w^\epsilon(\beta^{\epsilon})}\right|=\tan\beta^{\epsilon}\left(\frac{2E}{\sin^2\beta^{\epsilon}}+1\right)\frac{\epsilon}{\epsilon+2}\to 0,\quad (\epsilon\to0).	 \end{equation}
 Therefore, we have, for any vector field $\Psi\in C^1(S^2),$
	\begin{equation}\label{A27}
	\lim_{\epsilon\to 0}\Big|\langle m_a^\epsilon,\,\Psi\rangle- \langle m_a,\,\Psi\rangle\Big|=0,
	\end{equation}
	that is, $m^{\epsilon}_a\to m_a$ vaguely. For the convergence of measures $m_e^\epsilon,~m_r^\epsilon,~m_t^\epsilon,~n_t^\epsilon,$ the proof remains the same as  $m^{\epsilon}_a.$
	
	2.~Next we prove  $n_a^\epsilon \to n_a$.  For any $\psi\in C(S^2)$,  one has
	\begin{align}
	 \langle n_a^\epsilon,\, \psi\rangle&=\int_{\Omega_1^\epsilon}\psi(\theta,\phi)\rho_0w_0\,\mathrm{d}A+\int_{\Omega_2^\epsilon}\psi(\theta,\phi)\rho^\epsilon(\theta)w^\epsilon(\theta)\,\mathrm{d}A,~\\
	 \langle n_a,\, \psi\rangle&=\int_{\Omega}\psi(\theta,\, \phi)\rho_0w_0\,\mathrm{d}A+\int_{C}\psi(\theta,\, \phi)\frac{1}{2}\sin\theta_0\,\mathrm{d}s\nonumber\\&=\int_{\Omega}\psi(\theta,\, \phi)\rho_0w_0\,\mathrm{d}A+\int_{0}^{2\pi}\psi(\theta_0,\, \phi)\frac{1}{2}\sin^2\theta_0\,\mathrm{d}\phi.
	\end{align}
By \eqref{**}, it follows that 
	\begin{align}\label{4.18}	
	&\quad \quad~\Big|\langle n_a^\epsilon,\,\psi\rangle-\langle n_a,\,\psi\rangle\Big| \nonumber \\&
	=\Big|\int_{\Omega_2^\epsilon}\psi(\theta,\phi)\rho^\epsilon(\theta)w^\epsilon(\theta)\,\mathrm{d}A-\int_{\Omega_2^\epsilon}\psi(\theta,\phi)\rho_0w_0\,\mathrm{d}A-\int_0^{2\pi}\psi(\theta_0,~\phi)\frac{1}{2}\sin^2\theta_0\, \mathrm{d}\phi\Big|\nonumber\\ &
	\leq \Big|\int_0^{2\pi}\int_{\theta_0}^{\beta^\epsilon} [\psi(\theta,~\phi)-\psi(\theta_0,~\phi)]\rho^\epsilon w^\epsilon \sin\theta\,\mathrm{d}\theta\,\mathrm{d}\phi\Big|+\norm{\psi}_{L^{\infty}(S^2)}\norm{\rho_0w_0}_{L^{\infty}(\Omega)}\Big|\Omega_2^{\epsilon}\Big|
	\nonumber\\&\quad\quad~+ \Big|\int_0^{2\pi}\int_{\theta_0}^{\beta^\epsilon} \psi(\theta_0,~\phi)\rho^\epsilon(\theta)w^\epsilon(\theta)\sin\theta\,\mathrm{d}\theta\mathrm{d}\phi-\int_0^{2\pi}\psi(\theta_0,~\phi)\frac{1}{2}\sin^2\theta_0\,\mathrm{d}\phi\Big|\nonumber\\ & \leq \frac{1}{2}\sin^2\beta^{\epsilon} \int_0^{2\pi}\sup_{\theta\in[\theta_0,~\beta^\epsilon]}\Big|\psi(\theta,~\phi)-\psi(\theta_0,~\phi)\Big|\,\mathrm{d}\phi+\norm{\psi}_{L^{\infty}(S^2)}\norm{\rho_0w_0}_{L^{\infty}(\Omega)}\Big|\Omega_2^{\epsilon}\Big|\nonumber\\&\quad\quad~+ \pi\norm{\psi}_{L^{\infty}(S^2)}\Big|\sin^2\beta^{\epsilon}-\sin^2\theta_0\Big|,
	\end{align}
which, together with $\lim_{\epsilon\to0}|\Omega_2^{\epsilon}|=0$ and  $\lim_{\epsilon\to0}\beta^{\epsilon}=\theta_0$, gives that 
\begin{equation}
\lim_{\epsilon\to0} \langle n_a^\epsilon,\, \psi\rangle=\langle n_a,\, \psi\rangle;
\end{equation}
that is, $n_a^\epsilon$ converges to $n_a$ vaguely. And then, by a similar process shown in \eqref{4.18}, we can prove  $n_e^\epsilon\to n_e$ vaguely for $\epsilon\to0$. 
	
	3.~We next prove that $n_r^\epsilon\to n_r$ vaguely for $\epsilon\to0.$ For any $\psi\in C(S^2),$ one has 
	\begin{align}
	\langle n_r^\epsilon,\, \psi\rangle&=\int_{\Omega_1^\epsilon}\psi(\theta,\phi)\rho_0w^2_0\,\mathrm{d}A+\int_{\Omega_2^\epsilon}\psi(\theta,\phi)\rho^\epsilon(\theta)(w^\epsilon)^2(\theta)\,\mathrm{d}A,~\\
	 \langle n_r,\, \psi\rangle&=\int_{\Omega}\psi(\theta,\, \phi)\rho_0w^2_0\,\mathrm{d}A+\int_{C}\psi(\theta,\, \phi)\frac{1}{2}\sin\theta_0\cos\theta_0\,\mathrm{d}s\nonumber\\&=\int_{\Omega}\psi(\theta,\, \phi)\rho_0w^2_0\,\mathrm{d}A+\int_{0}^{2\pi}\psi(\theta_0,\, \phi)\frac{1}{2}\sin^2\theta_0\cos\theta_0\,\mathrm{d}\phi.
	\end{align}
Then noticing that $\rho(\theta)>0, w^{\epsilon}(\theta)>0$ for $\theta\in(\theta_0,\beta^{\epsilon})$,  $w(\beta^{\epsilon})=\cos\beta^{\epsilon}$ (by $\eqref{Shockcondition}_6$ with $\beta$ replaced by $\beta^{\epsilon}$), and \eqref{**}, we have  
	\begin{align}
	&\quad \quad~\Big|\langle n_r^\epsilon,\, \psi\rangle-\langle n_r,\, \psi\rangle\Big|\nonumber\\&=\Big|\int_{\Omega_2^{\epsilon}}\psi(\theta,\phi)\rho^{\epsilon}(\theta)(w^{\epsilon})^2(\theta)~\mathrm{d}A-\int_{\Omega_2^{\epsilon}}\psi(\theta,\phi)\rho_0w_0^2~\mathrm{d}A-\int_{0}^{2\pi}\psi(\theta_0,\, \phi)\frac{1}{2}\sin^2\theta_0\cos\theta_0\,\mathrm{d}\phi\Big|\nonumber\\&\leq\Big|\int_{\Omega_2^\epsilon}\psi(\theta,\phi)\rho^\epsilon w^\epsilon (w^\epsilon(\theta)-w^\epsilon(\beta^\epsilon))\mathrm{d}A+w^\epsilon(\beta^\epsilon)\int_{\Omega_2^\epsilon}\psi(\theta,\phi)\rho^\epsilon w^\epsilon \mathrm{d}A\nonumber\\&\quad\quad~-\int_{0}^{2\pi}\psi(\theta_0,\, \phi)\frac{1}{2}\sin^2\theta_0\cos\theta_0\,\mathrm{d}\phi\Big|+\Big|\int_{\Omega_2^{\epsilon}}\psi(\theta,\phi)\rho_0w_0^2~\mathrm{d}A\Big|\nonumber\\&\leq\Big|w^\epsilon(\beta^\epsilon)\int_{\Omega_2^\epsilon}\psi(\theta,\phi)\rho^\epsilon w^\epsilon \mathrm{d}A-\int_{0}^{2\pi}\psi(\theta_0,\, \phi)\frac{1}{2}\sin^2\theta_0\cos\theta_0\,\mathrm{d}\phi\Big|\nonumber\\&\quad\quad~+\Big|\int_{\Omega_2^\epsilon}\psi(\theta,\phi)\rho^\epsilon w^\epsilon (w^\epsilon(\theta)-w^\epsilon(\beta^\epsilon))\mathrm{d}A\Big|+\Big|\int_{\Omega_2^{\epsilon}}\psi(\theta,\phi)\rho_0w_0^2~\mathrm{d}A\Big|\nonumber\\&\leq\Big|w^\epsilon(\beta^\epsilon)\int_{\Omega_2^\epsilon}\psi(\theta_0,\phi)\rho^\epsilon w^\epsilon \mathrm{d}A-\int_{0}^{2\pi}\psi(\theta_0,\, \phi)\frac{1}{2}\sin^2\theta_0\cos\theta_0\,\mathrm{d}\phi\Big|\nonumber\\&\quad\quad~+\Big|w^\epsilon(\beta^\epsilon)\int_{\Omega_2^\epsilon}(\psi(\theta,\phi)-\psi(\theta_0,\phi))\rho^\epsilon w^\epsilon \mathrm{d}A\Big|\nonumber\\&\quad\quad~+\Big|\int_{\Omega_2^\epsilon}\psi(\theta,\phi)\rho^\epsilon w^\epsilon (w^\epsilon(\theta)-w^\epsilon(\beta^\epsilon))\mathrm{d}A\Big|+\Big|\int_{\Omega_2^{\epsilon}}\psi(\theta,\phi)\rho_0w_0^2~\mathrm{d}A\Big|\nonumber\\&\leq \pi\norm{\psi}_{L^{\infty}(S^2)}\Big|\sin^2\beta^{\epsilon}\cos\beta^{\epsilon}-\sin^2\theta_0\cos\theta_0\Big|+\norm{\psi}_{L^{\infty}(S^2)}\norm{\rho w_0^2}_{L^{\infty}(\Omega)}\Big|\Omega^{\epsilon}_2\Big|\nonumber\\&\quad\quad~+\pi\norm{\psi}_{L^{\infty}(S^2)}\sup_{\theta\in[\theta_0,~\beta^\epsilon]}|w^\epsilon(\theta)-w^\epsilon(\beta^\epsilon)|\sin^2\beta^{\epsilon}\nonumber\\&\quad\quad~+\frac{1}{2}\sin^2\beta^{\epsilon}\cos\beta^{\epsilon}\int_{0}^{2\pi}\sup_{\theta\in[\theta_0,~\beta^\epsilon]}|\psi(\theta,\phi)-\psi(\theta_0,\phi)|~\mathrm{d}\phi.
	\end{align}
	Thanks to  $\lim_{\epsilon\to0}|\Omega_2^{\epsilon}|=0$ and $\lim_{\epsilon\to0}\beta^{\epsilon}=\theta_0$, one has 
	\begin{equation}	
	\lim_{\epsilon\to0} \langle n_r^\epsilon,\, \psi\rangle=\langle n_r,\, \psi\rangle,
	\end{equation}
	hence  $n_r^\epsilon$ converges to $n_r$ vaguely.

4.~Now we show that $\varrho^{\epsilon}$ converges to $\varrho$ vaguely for $\epsilon\to0$. For any $\psi\in C(S^2),$ one has 
\begin{align}
\langle\varrho^\epsilon,~\psi\rangle&=\int_{\Omega_1^{\epsilon}}\psi(\theta,~\phi)\rho_0~\mathrm{d}A+\int_{\Omega_2^\epsilon}\psi(\theta,~\phi)\rho^\epsilon(\theta)~\mathrm{d}A,~\\
\langle\varrho,~\psi\rangle&=\int_{\Omega}\psi(\theta,~\phi)\rho_0~\mathrm{d}A+\int_C\psi(\theta,~\phi)\frac{1}{2}\tan\theta_0~\mathrm{d}s\nonumber\\&=\int_{\Omega}\psi(\theta,~\phi)\rho_0~\mathrm{d}A+\int_{0}^{2\pi}\psi(\theta_0,\phi)\frac{1}{2}\tan\theta_0\sin\theta_0\,\mathrm{d}\phi.
\end{align}
Thanks to the facts that $w(\beta^\epsilon)=\cos\beta^\epsilon, ~\rho^{\epsilon}=\rho^{\epsilon}w^{\epsilon}\Big(\frac{1}{w^\epsilon}-\frac{1}{w^\epsilon(\beta^\epsilon)}+\frac{1}{w^\epsilon(\beta^\epsilon)}\Big)$, and \eqref{**}, one has 
\begin{align}
&\quad\quad~\Big|\langle\varrho^\epsilon,~\psi\rangle-\langle\varrho,~\psi\rangle\Big|\nonumber\\&\leq\Big|\int_{\Omega_2^\epsilon}\psi(\theta,~\phi)\rho^\epsilon(\theta)~\mathrm{d}A-\int_{0}^{2\pi}\psi(\theta_0,\phi)\frac{1}{2}\tan\theta_0\sin\theta_0\,\mathrm{d}\phi\Big|+\Big|\int_{\Omega^{\epsilon}_2}\psi(\theta,~\phi)\rho_0~\mathrm{d}A\Big|\nonumber\\
&\leq\Big|\int_{\Omega_2^\epsilon}\psi(\theta,\phi)\rho^\epsilon w^\epsilon \frac{1}{w^\epsilon(\beta^\epsilon)}~\mathrm{d}A-\int_{0}^{2\pi}\psi(\theta_0,\phi)\frac{1}{2}\tan\theta_0\sin\theta_0\,\mathrm{d}\phi\Big|\nonumber\\
&\quad\quad~+\Big|\int_{\Omega_2^\epsilon} \psi(\theta,\phi)\rho^\epsilon w^\epsilon \Big(\frac{1}{w^\epsilon(\theta)}-\frac{1}{w^\epsilon(\beta^\epsilon)}\Big)~\mathrm{d}A\Big|+\norm{\psi}_{L^{\infty}(S^2)}\norm{\rho_0}_{L^{\infty}(\Omega)}\Big|\Omega^{\epsilon}_2\Big|\nonumber
\\&\leq\Big|\int_{\Omega_2^\epsilon}\psi(\theta_0,\phi)\rho^\epsilon w^\epsilon \frac{1}{w^\epsilon(\beta^\epsilon)}~\mathrm{d}A-\int_{0}^{2\pi}\psi(\theta_0,\phi)\frac{1}{2}\tan\theta_0\sin\theta_0\,\mathrm{d}\phi\Big|\nonumber\\
&\qquad\qquad+\norm{\psi}_{L^{\infty}(S^2)}\norm{\rho_0}_{L^{\infty}(\Omega)}\Big|\Omega^{\epsilon}_2\Big|+\int_{\Omega_2^\epsilon}|\psi(\theta,\phi)-\psi(\theta_0,\phi)|\rho^\epsilon w^\epsilon \frac{1}{w^\epsilon(\beta^\epsilon)}~\mathrm{d}A\nonumber\\
&\qquad\qquad\quad\quad~+
\pi\sin^2\beta^{\epsilon}\norm{\psi}_{L^{\infty}(S^2)} \sup_{\theta\in[\theta_0,\beta^{\epsilon}]}\Big|\frac{1}{w^\epsilon(\theta)}-\frac{1}{w^\epsilon(\beta^\epsilon)}\Big|\nonumber
\\&\leq\pi\norm{\psi}_{L^{\infty}(S^2)}\Big|\tan\beta^{\epsilon}\sin\beta^{\epsilon}-\tan\theta_0\sin\theta_0\Big|+\norm{\psi}_{L^{\infty}(S^2)}\norm{\rho_0}_{L^{\infty}(\Omega)}\Big|\Omega^{\epsilon}_2\Big|\nonumber\\
&\quad\quad~+ \frac{1}{2}\tan\beta^{\epsilon}\sin\beta^{\epsilon}\int_{0}^{2\pi}\sup_{\theta\in[\theta_0,~\beta^\epsilon]}|\psi(\theta,\phi)-\psi(\theta_0,\phi)|~\mathrm{d}\phi\nonumber\\&\quad\quad\qquad~+\pi\sin^2\beta^{\epsilon}\norm{\psi}_{L^{\infty}(S^2)} \sup_{\theta\in[\theta_0,\beta^{\epsilon}]}\Big|\frac{1}{w^\epsilon(\theta)}-\frac{1}{w^\epsilon(\beta^\epsilon)}\Big|.
\end{align}
Using again $\lim_{\epsilon\to0}|\Omega_2^{\epsilon}|=0,~ \lim_{\epsilon\to0}\beta^{\epsilon}=\theta_0$, it yields that 
\begin{equation}
\lim_{\epsilon\to0} \langle \varrho^\epsilon,\, \psi\rangle=\langle \varrho,\, \psi\rangle,
\end{equation}
namely,  $\varrho^\epsilon$ converges to $\varrho$ vaguely.

5.~As for $\wp^\epsilon$ and $\wp,$ for any $\psi\in C(S^2),$ one has 
\begin{align}
& \langle\wp^\epsilon,~\psi\rangle=\int_{\Omega_1^{\epsilon}}\psi(\theta,~\phi)p_0~\mathrm{d}A+\int_{\Omega_2^\epsilon}\psi(\theta,~\phi)p^\epsilon(\theta)~\mathrm{d}A,~\\
& \langle\wp,~\psi\rangle=0.
\end{align}
Recalling that $p=\frac{\epsilon}{1+\epsilon}\rho(E-\frac{1}{2}|V|^2), E=E_0$ and $w^{\epsilon}(\theta)>w^{\epsilon}(\beta^\epsilon)=\cos\beta^\epsilon$, it holds that 
\begin{equation}
\begin{aligned}
&\quad\quad~\Big|\langle\wp^\epsilon,~\psi\rangle- \langle\wp,~\psi\rangle\Big|\\&=\Big|\int_{\Omega_1^{\epsilon}}\psi(\theta,~\phi)p_0~\mathrm{d}A+\int_{\Omega_2^\epsilon}\psi(\theta,~\phi)p^\epsilon(\theta)~\mathrm{d}A\Big|\\&\leq\norm{\psi}_{L^{\infty}(S^2)}\Big|\Omega\Big|p_0+\frac{\epsilon}{1+\epsilon}\int_{\Omega_2^{\epsilon}}\psi(\theta,\phi)\rho^{\epsilon}\Big|E_0-\frac{1}{2}|V|^2\Big|\,\mathrm{d}A\\&\leq \norm{\psi}_{L^{\infty}(S^2)}\Big|\Omega\Big|p_0+ \frac{\epsilon}{1+\epsilon} \frac{E_0\norm{\psi}_{L^{\infty}(S^2)}}{\cos\beta^\epsilon}\int_{\Omega_2^{\epsilon}}\rho^{\epsilon}w^{\epsilon}\,\mathrm{d}A\\&\leq \norm{\psi}_{L^{\infty}(S^2)}\Big|\Omega\Big|p_0+\frac{\epsilon}{1+\epsilon}\pi E_0\norm{\psi}_{L^{\infty}(S^2)}\tan\beta^\epsilon\sin\beta^\epsilon.
\end{aligned}
\end{equation}
Since $\lim_{\epsilon\to0}p_0=0$ (see \eqref{2.6}) and $\lim_{\epsilon\to0}\beta^\epsilon=\theta_0,$ we infer that 
\begin{equation}
\lim_{\epsilon\to0} \langle \wp^\epsilon,\, \psi\rangle=\langle \wp,\, \psi\rangle,
\end{equation}
thus $\wp^\epsilon$ converges to $\wp$ vaguely.

6.~Moreover, by $\lim_{\epsilon\to0}\beta^\epsilon=\theta_0$ and $\eqref{Shockcondition}$, we have 
	\begin{equation}
	\lim_{\epsilon\to0}p^\epsilon(\theta_0)=\sin^2\theta_0=W_C.
	\end{equation}
The proof is complete. 
\end{proof}

Thus we arrive at the main result of this section.
\begin{theorem}
    For uniform incoming supersonic flow past a straight right-circular cone with half vertex-angle $\theta_0\in(0,\pi/2)$, if the Mach number of the incoming flow $M_{0}$ is suitably large (i.e., $\epsilon$ is suitably small), then there is a  piecewise smooth conical weak entropy solution to problem \eqref{E1}, \eqref{Initialdata} and \eqref{Boundarycondition}. Furthermore, as $M_{0}$ reaches infinity, the piecewise smooth conical weak entropy solution converges in the sense of Lemma \ref{lem41} to the Radon measure solution of the limiting-hypersonic conical flow, which is given by \eqref{49} and \eqref{4.10}.
\end{theorem}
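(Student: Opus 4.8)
The plan is to treat this theorem as an assembly of the results already established, translated back from the reduced problem on $S^2$ to the original problem in $\mathbb{R}^3$. For the existence half, I would first invoke Theorem \ref{T1}: given the cone angle $\theta_0\in(0,\pi/2)$, it furnishes $\epsilon^*>0$ so that for every $\epsilon\in(0,\epsilon^*)$ there is a shock half-angle $\beta^\epsilon$ with $\mathcal{T}(\beta^\epsilon)=\theta_0$, together with the accompanying $C^1$ field on $(\theta_0,\beta^\epsilon)$ solving the ODE system \eqref{Equ1} under the data \eqref{Shockcondition} and the boundary condition \eqref{Conecondition}. The monotonicity and admissibility of this field are guaranteed by Lemma \ref{Lemma111} and Corollary \ref{Corollary111}. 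Plugging it into \eqref{ss} produces a piecewise-smooth axisymmetric state $U^\epsilon$ on $\Omega\subset S^2$; this is a weak entropy solution of \textsc{Problem }$1'$ in the sense of Definition \ref{Def1}, since the interior equations hold by construction, the Rankine--Hugoniot relations \eqref{RH1.1}--\eqref{RH1.5} hold at $\theta=\beta^\epsilon$ precisely because \eqref{Shockcondition} was derived from them, and the entropy inequality \eqref{RH1.6} holds as $p(\beta^\epsilon)>p_0$. By the conical reduction of Section \ref{sec2}, lifting $U^\epsilon$ to be constant along rays yields the desired piecewise-smooth conical weak entropy solution of the original problem \eqref{E1}, \eqref{Initialdata}, \eqref{Boundarycondition}.

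For the convergence half, I would attach to $U^\epsilon$ the Radon measures \eqref{4.1.35}--\eqref{4.1.39}; a direct check against Definition \ref{Def2} shows that $\mathcal{U}^\epsilon=(\varrho^\epsilon,\tilde u^\epsilon,w^\epsilon,E^\epsilon)$ is a Radon measure solution for each such $\epsilon$. The candidate limit is the measure solution $\mathcal{U}$ of Theorem \ref{t4.1}, whose associated measures are exactly \eqref{4.1.30}--\eqref{4.1.34} and which is precisely the object \eqref{49}--\eqref{4.10}. The convergence statement then reduces verbatim to Lemma \ref{lem41}: estimate \eqref{A26} in Theorem \ref{T1} forces $\beta^\epsilon\to\theta_0$, so the intermediate region $\Omega_2^\epsilon$ between shock and cone collapses onto the circle $C$ with $|\Omega_2^\epsilon|\to0$, and Lemma \ref{lem41} delivers the vague convergence of each of $m_a^\epsilon,m_e^\epsilon,m_r^\epsilon,m_t^\epsilon,n_a^\epsilon,n_e^\epsilon,n_r^\epsilon,n_t^\epsilon,\varrho^\epsilon,\wp^\epsilon$ to its counterpart, together with $p^\epsilon(\theta_0)\to\sin^2\theta_0=W_C$. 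This is exactly ``convergence in the sense of Lemma \ref{lem41}'' as asserted, so the theorem follows.

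The genuine difficulty, though already dispatched in the preceding sections, is twofold and worth flagging as the crux. First, one must prove $\beta^\epsilon\to\theta_0$, i.e.\ that the attached shock hugs the cone ever more tightly as $M_0\to\infty$; this rests on the uniform lower bound for $|u'|$ coming from \eqref{eq326new} together with the monotonicity of Lemma \ref{Lemma111} and Corollary \ref{Corollary111}, which together drive the quantitative estimate \eqref{A23}. Second, in the collapse $\Omega_2^\epsilon\to C$ the density $\rho^\epsilon$ blows up while $|\Omega_2^\epsilon|\to0$, so the limits of the mass and momentum measures are not automatic: the decisive tool is the exact mass-flux identity \eqref{**}, namely $\int_{\Omega_2^\epsilon}\rho^\epsilon w^\epsilon\,\mathrm{d}A=\pi\sin^2\beta^\epsilon$, a consequence of \eqref{eq1}, which keeps the concentrating mass bounded and pins the Dirac weight to $\tfrac12\tan\theta_0$. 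The most delicate estimates, those for $n_r^\epsilon$ and $\varrho^\epsilon$, require the add-and-subtract device with $w^\epsilon(\beta^\epsilon)=\cos\beta^\epsilon$ to isolate that weight; since these are precisely the computations carried out in Lemma \ref{lem41}, the present theorem is then obtained simply by combining the cited results.
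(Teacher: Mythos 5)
Your proposal is correct and follows essentially the same route as the paper: the paper states this theorem as the direct assembly of Theorem \ref{T1} (existence of the piecewise smooth conical weak entropy solution via the direct problem), Theorem \ref{t4.1} (the limiting Radon measure solution \eqref{49}--\eqref{4.10}), and Lemma \ref{lem41} (vague convergence of the measures \eqref{4.1.35}--\eqref{4.1.39} to \eqref{4.1.30}--\eqref{4.1.34}), offering no separate argument beyond these citations. Your identification of the two crucial ingredients --- the shock collapse $\beta^\epsilon\to\theta_0$ from \eqref{A23}/\eqref{A26} and the mass-flux identity \eqref{**} pinning the Dirac weight --- matches exactly the load-bearing steps of the paper's preceding analysis.
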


\section{Supersonic  Chaplygin gas passing straight cones}\label{sec5}
In this section, we consider isentropic supersonic Chaplygin gas passing a straight right-circular cone. The governing equations are 
\begin{equation}\label{EE1}
\left\{
\begin{aligned}
& \mathrm{Div}(\rho V)=0, \\
& \mathrm{Div}(\rho V\otimes V)+\mathrm{Grad}\, p=0,\\
& p=A-B\rho^{-1},
\end{aligned}
\right.
\end{equation}
with $A, B$ being given positive constants. The other notations, such as $\mathrm{Div}, \mathrm{Grad}, \rho, V$ represent the same as those in \eqref{E1},\eqref{Initialdata} and \eqref{Boundarycondition}. 
The supersonic uniform incoming flow is
\begin{equation}\label{II1}
(\rho, V)|_{x_1=0}=(\rho_{\infty},(V_{\infty},0,0)),
\end{equation}
and the slip boundary condition reads 
\begin{equation}\label{BB1}
V\cdot \vec{n}|_{\mathcal{C}}=0.
\end{equation}

Also, similar to the polytropic gases, the governing equations for conical Chaplygin flow passing a straight right-circular cone is reduced to \eqref{eq1}-\eqref{eq4}, with boundary conditions \eqref{NewBoundarycondition}.  The definition of weak solutions and Radon measure solutions are the same as  Definitions \ref{Def1} and  \ref{Def2},  just ignoring the equations and terms related to $E$. 

\begin{proposition}\label{Prop2}
	For supersonic Chaplygin gas (i.e. $M_{0}\doteq\frac{\rho_{\infty}V_{\infty}}{\sqrt{B}}\i>1$) passing a straight right-circular cone with half vertex angle $\theta_0$, suppose that there is a piecewise smooth weak solution with a leading conical discontinuous surface (denoted by $\mathcal{D}\mathcal{S}$) attaching at the given cone vertex point, then the half vertex-angle of $\mathcal{D}\mathcal{S}$ is $\beta_0=\arctan\frac{1}{\sqrt{M^2_0-1}}$. It is independent of $\theta_0.$ 
Furthermore, it holds that $$\lim_{M_{0}\to1/\sin\theta_0-}\beta_0=\theta_0.$$ 
\end{proposition}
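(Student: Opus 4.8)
The plan is to read off the half-angle $\beta_0$ directly from the Rankine--Hugoniot relations across the conical surface $\mathcal{D}\mathcal{S}=\{\theta=\beta_0\}$, using the characteristic feature of a Chaplygin gas that the acoustic impedance is constant: from \eqref{EE1} one has $c^2=p'(\rho)=B\rho^{-2}$, so $\rho c\equiv\sqrt{B}$. First I would record the jump conditions. Exactly as in Proposition \ref{Prop1}, but discarding the energy relation \eqref{eq2} (which is absent for the isentropic system \eqref{EE1}), the weak formulation yields across $\theta=\beta_0$ the analogues of \eqref{RH1.1}, \eqref{RH1.3} and \eqref{RH1.4}, namely
\begin{equation}
[\rho u]=0,\qquad [\rho w u]=0,\qquad [p+\rho u^2]=0,
\end{equation}
where $u$ is the $\vec{\partial}_\theta$-component (normal to $\mathcal{D}\mathcal{S}$), $w$ the radial component, and $[\cdot]$ denotes the jump from the upstream uniform state to the downstream state. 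The factor $\sin\theta$ in the surface measure is continuous at $\theta=\beta_0$ and drops out, so these are bona fide one-dimensional jump relations. By \eqref{upcoming} the upstream data are $u_0=V_\infty\sin\beta_0$ and $w_0=V_\infty\cos\beta_0$ (the normal and tangential parts of the incoming velocity relative to $\mathcal{D}\mathcal{S}$), together with $p_0=A-B\rho_\infty^{-1}$ and $c_0=\sqrt{B}/\rho_\infty$.

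The key step is a short computation. Let $j\doteq\rho_\infty u_0=\rho u$ be the common normal mass flux given by $[\rho u]=0$. Then the momentum relation $p_0+\rho_\infty u_0^2=p+\rho u^2$ rearranges as $p-p_0=j(u_0-u)=j^2(\rho_\infty^{-1}-\rho^{-1})$, while the state equation gives $p-p_0=B(\rho_\infty^{-1}-\rho^{-1})$. Subtracting these two expressions leaves the single scalar identity
\begin{equation}
(B-j^2)\Big(\frac{1}{\rho_\infty}-\frac{1}{\rho}\Big)=0.
\end{equation}
Since $\mathcal{D}\mathcal{S}$ is a genuine discontinuity, $\rho\neq\rho_\infty$, so the second factor is nonzero and necessarily $j^2=B$, i.e. $u_0^2=B\rho_\infty^{-2}=c_0^2$: the normal velocity is exactly sonic, and $\mathcal{D}\mathcal{S}$ is a characteristic (Mach) surface. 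This is where the Chaplygin structure is essential — the linear dependence $p-p_0\propto(\rho_\infty^{-1}-\rho^{-1})$ collapses what would be, for a general pressure law, a one-parameter family of oblique shocks into this single sonic condition.

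It then remains to convert $u_0^2=c_0^2$ into the stated formula. Substituting $u_0=V_\infty\sin\beta_0$ and $c_0=\sqrt{B}/\rho_\infty$ gives $V_\infty\sin\beta_0=c_0$, hence $\sin\beta_0=c_0/V_\infty=\sqrt{B}/(\rho_\infty V_\infty)=1/M_0$. As $\beta_0\in(0,\pi/2)$ this is equivalent to $\tan\beta_0=1/\sqrt{M_0^2-1}$, that is $\beta_0=\arctan\big(1/\sqrt{M_0^2-1}\big)$, whose right-hand side depends only on $M_0$; thus $\beta_0$ is independent of $\theta_0$, while $[\rho w u]=0$ merely forces $w=w_0$ and is not needed to fix $\beta_0$. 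For the limit, as $M_0\to(1/\sin\theta_0)^-$ we get $\sin\beta_0=1/M_0\to\sin\theta_0$, and the strict monotonicity of $\sin$ on $(0,\pi/2)$ yields $\beta_0\to\theta_0$.

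The main obstacle is conceptual rather than computational: one must recognize that mass conservation, normal-momentum conservation, and the Chaplygin state equation combine into the product identity above, whose vanishing forces $j^2=B$, so that the leading discontinuity can only be a sonic/characteristic surface and not a compressive shock. A secondary point needing care is the correct resolution of the uniform velocity into components normal and tangential to $\mathcal{D}\mathcal{S}$ (so that indeed $u_0=V_\infty\sin\beta_0$), and the remark that the continuous weight $\sin\theta$ legitimately reduces the surface jump conditions to the one-dimensional relations used above.
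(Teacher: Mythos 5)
Your proposal is correct and is essentially the paper's own argument: your product identity $(B-j^2)\left(\rho_\infty^{-1}-\rho^{-1}\right)=0$ is exactly the paper's relation $(\rho_+-1)=(\rho_+-1)M_\infty^2\sin^2\beta_0$ in \eqref{EE2} written in unnormalized variables, and both proofs conclude by ruling out $\rho_+=\rho_\infty$ (which, via mass conservation and the barotropic pressure law, would make all states continuous across $\mathcal{D}\mathcal{S}$) to force the sonic condition $M_0\sin\beta_0=1$, i.e.\ $\beta_0=\arctan\bigl(1/\sqrt{M_0^2-1}\bigr)$. The only cosmetic difference is that you derive the key identity transparently via the constant mass flux $j$ and record its interpretation as a characteristic (Mach) surface, whereas the paper states the combined relation directly among its jump conditions.
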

\begin{proof}
	Since the incoming flow is supersonic, the flow ahead of the discontinuous surface is still the incoming states. Thus, from the Rankine-Hugoniot jump conditions, one gets  
	\begin{equation}\label{EE2}
	\left\{
	\begin{aligned}
	&p_{+}+\rho_{+} u_{+}^2  =p_0+\rho_0 u_0^2\\
	&-1+\rho_{+}^2u^2M_\infty^2  =-\rho_{+}+\rho_{+} M_\infty^2\sin^2\beta_0,\\
	&(\rho_{+}-1) =(\rho_{+}-1)M_\infty^2\sin^2\beta_0,\\
	&w_{+}=w_{0},
	\end{aligned}
	\right.
	\end{equation}
	where $\rho_{+}$ is the limit of $\rho$ by approaching $\mathcal{D}\mathcal{S}$ from the region bounded by $\mathcal{D}\mathcal{S}$ and the cone, and similar definition applied to $p_{+}=A-\frac{B}{\rho_{+}},\, u_{+},\, w_{+}$.  Suppose that $\rho_{+}=\rho_0=1,$ then we have $u_{+}=u_0, \, w_{+}=w_{0},$ which conflicts that $\mathcal{D}\mathcal{S}$ is a discontinuity of the flow field. Thus, $\rho_{+}\neq1.$ Therefore, it follows from  $(\ref{EE2})_3$ that $M_{\infty}\sin\beta_0=1,$ that is $\beta_0=\arctan\frac{1}{\sqrt{M^2_{\infty}-1}}.$ Then, it is obvious that $\lim_{M_0\to1/\sin\theta_0}\beta_0=\theta_0.$ The proof is complete.
\end{proof}

Proposition \ref{Prop2} illustrates that it is not reasonable to define weak solution with a leading conical discontinuous surface for $M_{0}\geq\frac{1}{\sin\theta_0}.$ We thus consider the problem within the framework of Radon measure solution. Indeed, set 
\begin{align}
	& m_a=\rho_0u_0 \mathrm{d}A\mres\Omega+W_a(s)\delta_C,~ ~~~~~~~~~~~ n_a=\rho_0w_0 \mathrm{d}A\mres\Omega+w_a(s)\delta_C,\\
	& m_r=\rho_0u_0w_0 \mathrm{d}A\mres\Omega+W_r(s)\delta_C,~ ~~~~~~~~ n_r=\rho_0w_0^2 \mathrm{d}A\mres\Omega+w_e(s)\delta_C,\\
	& m_t=\rho_0u_0\otimes u_0 \mathrm{d}A\mres\Omega+W_t(s)\delta_C,~ ~~~~~~ n_t=\rho_0|u_0|^2 \mathrm{d}A\mres\Omega+w_t(s)\delta_C,\\
	& \varrho=\rho_0 \mathrm{d}A\mres\Omega+w_\rho\delta_C,~ ~~~~~~~~~~~~~~~~~~~~~~ \wp=-\frac{1}{\rho_0M_\infty^2} \mathrm{d}A\mres\Omega,
\end{align}
and substitute them into Definition \ref{Def2} (ignoring terms related to $E$), by a similar procedure as in \cite[Section 4]{QY} for polytropic gases, we obtain the following results.
\begin{theorem}
	For uniform supersonic Chaplygin gas (i.e. $M_{0}\doteq\frac{\rho_{\infty}V_{\infty}}{\sqrt{B}}\i>1$) passing a straight right-circular cone with half vertex-angle $\theta_0$, suppose that $M_{0}\geq\frac{1}{\sin\theta_0}.$ Then there is a  Radon measure solution, which is given by 
	\begin{align}
	&\varrho=\rho_0 \mathrm{d}A\mres\Omega+\frac{1}{2}\tan\theta_0\delta_C,~\quad \tilde{u}=\tilde{u}_0I_\Omega, \label{Chaplygin1} \\
	& w=w_0I_\Omega+\cos\theta_0I_C, \label{Chaplygin2}
	\end{align}
	with $(\rho_0,\tilde{u}_0, w_0)$ specified by \eqref{upcoming}. Moreover, the pressure on the cone surface is
	\begin{equation}
	W_C=\sin^2\theta_0-\frac{1}{\rho_0M_\infty^2}.\label{Chaplygin3}
	\end{equation}
\end{theorem}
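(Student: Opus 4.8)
The plan is to proceed by direct verification: we substitute the proposed measures---the bulk parts supported on $\Omega$ together with the Dirac layers $W_a\delta_C,\, w_a\delta_C,\dots, w_\rho\delta_C$ on the cone circle $C$---into the weak identities of Definition \ref{Def2} (with all energy terms suppressed), and read off the admissible weights. First I would exploit the Radon--Nikodym constraints in item (iii) of Definition \ref{Def2}. Since the concentrated gas travels along the cone's generators, its tangential velocity must vanish and its radial velocity must equal $\cos\theta_0$; that is, on $C$ we impose $\tilde{u}=0$ and $w=\cos\theta_0$. These constraints immediately force the vector- and tensor-valued weights $W_a,\, W_r,\, W_t,\, w_t$ to vanish, while tying the scalar weights together through $w_a=w\,w_\rho$ and $w_e=w^2 w_\rho$. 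Choosing the density weight $w_\rho=\tfrac12\tan\theta_0$ then reproduces exactly the structure \eqref{Chaplygin1}--\eqref{Chaplygin2}, with $w_a=\tfrac12\sin\theta_0$ and $w_e=\tfrac12\sin\theta_0\cos\theta_0$, in parallel with the polytropic weights of Theorem \ref{t4.1}.

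Next I would verify the two ingredients separately. In the interior region $\Omega$ the field is just the uniform incoming flow \eqref{upcoming}, which is a constant vector field in Cartesian coordinates and hence an exact conical solution; therefore the reduced balance laws \eqref{eq1}--\eqref{eq4} hold classically there, and after the scaling $\rho\mapsto\rho/\rho_\infty$, $p\mapsto p/(\rho_\infty V_\infty^2)$ the Chaplygin pressure $p=A-B\rho^{-1}$ becomes, up to the dynamically irrelevant additive constant, the constant bulk value $-1/(\rho_0 M_\infty^2)$, which is the density of $\wp$. The heart of the argument is the singular balance on $C$: for a test function $\psi$ (resp.\ a test field $\Psi$) I would split each pairing $\langle m,\,\nabla\psi\rangle$ or $\langle m_t,\,\mathrm{D}\Psi\rangle+\langle\wp,\,\mathrm{div}\Psi\rangle$ into its $\Omega$-integral and its $C$-integral, integrate the bulk part by parts over the spherical cap $\Omega$ by the divergence theorem on $S^2$, and observe that the resulting boundary integral is concentrated on $C=\partial\Omega$. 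Matching the coefficients of $\psi|_C$ and $\Psi|_C$ against the source terms and the Dirac contributions then yields the algebraic relations fixing the weights; in particular, the normal component of the momentum identity balances the incoming normal momentum flux $\rho_0\sin^2\theta_0$ plus the bulk pressure term against $W_C\mathbf{n}_C$, producing \eqref{Chaplygin3}.

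Finally I would check the role of the hypothesis $M_0\ge 1/\sin\theta_0$. This threshold is precisely the complement of the attached-discontinuity regime of Proposition \ref{Prop2}: the relation $\beta_0=\arctan(1/\sqrt{M_0^2-1})$ gives $\beta_0\le\theta_0$ exactly when $M_0\ge 1/\sin\theta_0$, so in this range no attached conical discontinuity can stand off the cone and the concentrated layer is the appropriate object. The same inequality guarantees $W_C=\sin^2\theta_0-1/(\rho_0 M_\infty^2)\ge 0$, which is needed for physical admissibility of the surface pressure.

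I expect the main obstacle to be the distributional computation on $S^2$ rather than any conceptual point: one must handle the covariant operators $\nabla$, $\mathrm{D}$ and the spherical divergence with their geometric factors (the $\sin\theta$ and $\cot\theta$ weights) correctly, so that the boundary terms produced by integration by parts over the cap match the Dirac-on-a-curve terms $\langle w_\bullet\delta_C,\,\cdot\,\rangle$ and $\langle W_C\mathbf{n}_C\delta_C,\,\Psi\rangle$ with the right constants. A secondary subtlety, new relative to \cite{QY}, is carrying the nonzero---and here negative---bulk pressure $\wp$ through the momentum identity and confirming that it enters only \eqref{Chaplygin3} via the $\mathrm{div}\,\Psi$ pairing, leaving the mass and radial-momentum balances (and hence the weights $w_\rho,\, w_a,\, w_e$) unchanged; one also has to confirm that the three Radon--Nikodym expressions for $w$ on $C$ are mutually consistent, which is what makes the density weight $\tfrac12\tan\theta_0$ the unique admissible choice.
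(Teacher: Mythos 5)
Your proposal takes essentially the same route as the paper: the paper also posits the measure ansatz with Dirac layers $W_a\delta_C,\dots,w_\rho\delta_C$ on $C$, substitutes it into Definition \ref{Def2} with the $E$-terms dropped, and fixes the weights by the bulk-integration-by-parts and coefficient-matching procedure of \cite[Section 4]{QY}, yielding $w_\rho=\tfrac12\tan\theta_0$ and $W_C=\sin^2\theta_0-\tfrac{1}{\rho_0M_\infty^2}$ exactly as you describe. Your write-up in fact supplies more of the distributional details than the paper (which defers them to the cited reference), and your points about the Radon--Nikodym consistency forcing the weight $\tfrac12\tan\theta_0$ and about $M_0\ge 1/\sin\theta_0$ being the complement of the attached-discontinuity regime of Proposition \ref{Prop2} (and ensuring $W_C\ge 0$) are correct.
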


\section*{Acknowledgments}
This work is supported by the National Natural Science Foundation of China under Grants No.11871218, No.12071298, and by Science and Technology Commission of Shanghai Municipality under Grant No.22DZ2229014.

\bibliographystyle{siam}
\bibliography{CKWXHypersonicLimit}



\end{document}